\documentclass[10pt]{amsart}
\usepackage{amsfonts,amssymb,amscd,amsmath,enumerate,verbatim,calc}
\usepackage{mathrsfs}
\numberwithin{equation}{section}
\newtheorem{theorem}{Theorem}[section]

\newtheorem{lemma}[theorem]{Lemma}
\newtheorem{proposition}[theorem]{Proposition}

\newtheorem{example}[theorem]{Example}
\newtheorem{remark}[theorem]{Remark}

\begin{document}

\title[Geometry of Tangent Bundles of Statistical Manifolds Equipped with Cheeger--Gromoll Type Metrics]
 { Geometry of Tangent Bundles of Statistical Manifolds Equipped with Cheeger--Gromoll Type Metrics}

\bibliographystyle{amsplain}

\author[Esmaeil Peyghan, Leila Nourmohammadifar and Ion Mihai]{E. Peyghan$^{1}$, L. Nourmohammadifar$^{1}$ and I. Mihai$^{2,*}$}
\address{$^{1}$Department of Mathematics, Faculty of Science, Arak University,
Arak, 38156-8-8349, Iran.}
\address{
$^{2}$Department of Mathematics, Faculty of Mathematics and Computer Science, University of
Bucharest, Bucharest 010014, Romania.}
\email{e-peyghan@araku.ac.ir, l-nourmohammadifar@araku.ac.ir, imihai@fmi.unibuc.ro}


\keywords{ Statistical manifold, Cheeger-Gromoll type metric,  statistical connection, skewness tensor, sectional curvature, scalar curvature. 
\\
*Corresponding author
\\
\text{2020 \ Mathematics Subject Classification. } 53B12, 53B20, 53B35, 62B10}

\begin{abstract}
	In this paper, we investigate the geometry of the tangent bundle $TM$ of a statistical manifold $(M,g,\nabla)$ endowed with a two-parameter family of generalized Cheeger--Gromoll metrics $g_{p,q}$.  We compute the associated the Levi--Civita connection $\nabla^{p,q}$ and express its curvature in terms of the Riemannian curvature and the skewness tensor $K$ of the base statistical manifold. 	
	We further analyze the behavior of geodesics, identify conditions under which the fibers of $TM$ are totally geodesic, and determine when the geodesic flow associated with $g_{p,q}$ is incompressible. Moreover, we establish necessary and sufficient conditions for the tangent bundle to admit constant sectional curvature. Several examples are provided to illustrate the theory, including statistically deformed Euclidean spaces and information geometric models such as the manifold of normal distributions. 
	The sectional curvature of $(TM, g_{p,q})$ is computed for horizontal, vertical, and mixed directions, leading to a concise expression for the corresponding scalar curvature. 
	\end{abstract}

\maketitle

\section{Introduction}

Statistical manifolds, introduced by S. Amari and H. Nagaoka in the late 20th century \cite{AMH}, are a key concept in information geometry, which connects differential geometry with statistics. In this framework, a family of probability distributions is viewed as a differentiable manifold equipped with a Riemannian metric and a pair of dual affine connections. The Riemannian metric, especially the Fisher information metric, measures how easily nearby distributions can be distinguished, while the affine connections capture statistical properties such as bias and efficiency. Statistical manifolds have many applications in statistics, machine learning, and theoretical physics, providing a geometric perspective for problems involving uncertainty and information \cite{AM, A, BG, BN}. In recent years, they have been studied extensively, yielding a number of important results \cite{Blaga, Cai, D, Furuhata, PNI, Peyghan, PNII, OP}.

For an open subset $\Theta$ of $\mathbb{R}^n$ and a sample space $\Omega$ with parameter $\theta = (\theta^1, \cdots, \theta^n)$, we call the set of probability density functions 
\[S = \{ p(x; \theta) : \int_{\Omega}p(x; \theta) = 1, \ \ p(x; \theta) >0, \ \ \theta \in \Theta \subseteq \mathbb{R}^n\},\]
as a statistical model. For a statistical model $S$, the semi-definite Fisher information matrix $g(\theta) = [g_{ij}(\theta)]$ is defined as
\begin{equation}\label{3}
g_{ij}(\theta) :=\int_{\Omega}\partial_{i}\ell_{\theta}\partial_{j}\ell_{\theta}p(x; \theta)\mathrm{d}x=E_p[\partial_{i}\ell_{\theta}\partial_{j}\ell_{\theta}],
\end{equation}
where $\ell_{\theta} = \ell(x; \theta) := \mathrm{log}p(x; \theta)$, $\partial_i := \frac{\partial}{\partial \theta^i}$, and $E_p[f]$ is the expectation of $f(x)$ with respect to $p(x; \theta)$. Equipping the space $S$ with such information matrices, it is called a statistical manifold in literature.

Historically, Fisher was the first to introduce the relation \eqref{3} as a mathematical formulation of information in 1920 (see \cite{F}). It is known that if the metric $g$ is positive-definite and all of its components converge to real numbers, then $(S, g)$ becomes a Riemannian manifold, and $g$ is called the Fisher metric on $S$. Using the Fisher metric $g$, an affine connection $\nabla$ associated with the probability density $p(x;\theta)$ is defined by
\begin{equation*}
\Gamma_{ij,k} = g(\nabla_{\partial_i}\partial_j, \partial_k) :=  E_p[(\partial_i\partial_j\ell_\theta)\partial_k\ell_\theta].
\end{equation*} 
A natural geometric object associated with any differentiable manifold is its tangent bundle, which carries rich geometric and topological structures. It provides the natural setting for defining vector fields and expressing fundamental geometric notions such as connections, curvature, and geodesics. Since the tangent bundle also serves as the phase space in dynamical systems and geometric mechanics, its geometry is central to understanding trajectories and developing higher-order geometric frameworks.

The study of tangent bundles, and more broadly of vector bundles, has a long and well-established history in differential geometry. In the 1960s and 1970s, significant progress was made in constructing and analyzing lifted metrics on tangent bundles, notably the Sasaki metric and the Cheeger--Gromoll metric \cite{CG1, S}. These lifted metrics extend the geometry of the base manifold to the tangent bundle, allowing a detailed investigation of geodesics, curvature, and global geometric properties in higher-dimensional settings. They also play an essential role in the geometry of phase spaces, in mechanics, and in various extensions of Riemannian geometry (see, for example, \cite{Abbassi, B, BL, PNII3}).

In the context of statistical manifolds, equipping the tangent bundle with a lifted metric allows us to explore second-order geometric features that are not visible on the base manifold alone \cite{Opozda}. Such metrics provide a natural way to study refined curvature properties, analyze geometric flows, and connect statistical structures to broader geometric theories, including Finsler geometry and geometric mechanics. Among these, Cheeger--Gromoll type metrics are particularly important; by introducing adjustable parameters, they extend the classical Cheeger--Gromoll construction and offer flexible tools for controlling curvature and the behavior of geodesics on the tangent bundle.

In this work, we extend Cheeger--Gromoll type metrics to the tangent bundles of statistical manifolds by introducing a two-parameter family of metrics $g_{p,q}$. We derive explicit formulas for the Levi--Civita connection and the curvature, and identify conditions under which the fibers are totally geodesic, the geodesic flow is incompressible, and the tangent bundle has constant sectional curvature. This study provides a natural geometric framework for exploring higher-order curvature properties in information geometry and offers tools for further investigations in geometric statistics and related areas.

The organization of the paper is as follows. 
In Section~2, we recall basic definitions and fundamental properties of statistical manifolds. 
Section~3 introduces the generalized Cheeger--Gromoll type metric $g_{p,q}$ on the tangent bundle $TM$. 
In Section~4, we study the Levi--Civita connection $\nabla^{p,q}$ of $(TM, g_{p,q})$, deriving explicit formulas using horizontal and vertical lifts. We also analyze the behavior of geodesics, characterize the conditions under which the fibers are totally geodesic, and determine when the geodesic flow is incompressible. In addition, we compute the curvature tensor of $\nabla^{p,q}$ and examine its dependence on the geometric data of the base statistical manifold $(M,g,\nabla)$. 
Section~5 derives conditions under which the tangent bundle admits constant sectional curvature. Several examples are presented to illustrate the theory, including Euclidean spaces with statistical deformations and information-geometric models such as the manifold of normal distributions. 
Finally, in Section~6, we investigate the scalar curvature of $(TM, g_{p,q})$ equipped with the generalized Cheeger--Gromoll metric.

\section{Statistical Manifolds}

Let $M$ be an $n$-dimensional differentiable manifold, and let $(U, x^i)$, $i = 1, \ldots, n$, be a local chart around a point $x \in U \subset M$. With respect to the coordinate system $(x^i)$, the local frame $\left. \frac{\partial}{\partial x^i} \right|_x$ serves as a basis for the tangent space $T_xM$.
Let $\nabla$ be an affine connection on $M$, and let $g$ be a pseudo-Riemannian metric on $M$. Then the connection $\nabla^*$ defined by
	\begin{align*} 
	Xg(Y,Z) = g(\nabla_X Y, Z) + g(Y, \nabla^*_X Z),
	\end{align*}
	is called the {dual (or conjugate) connection} of $\nabla$ with respect to the metric $g$.
The connection $\nabla$ is called a {Codazzi connection} if the cubic tensor field $\mathcal{C} = \nabla g$ is totally symmetric; that is, it satisfies the Codazzi equations:
	\begin{align*} 
	(\nabla_X g)(Y, Z) = (\nabla_Y g)(X, Z) = (\nabla_Z g)(X, Y), \quad \forall X, Y, Z \in \mathfrak{X}(M),
	\end{align*}
	where
	\begin{align} \label{M2'}
	(\nabla_X g)(Y, Z) = X g(Y, Z) - g(\nabla_X Y, Z) - g(Y, \nabla_X Z).
	\end{align}
	In local coordinates, the components of the cubic tensor $\mathcal{C}$ are given by
	\begin{align*} 
	\mathcal{C}_{ijk} = \partial_k g_{ij} - \Gamma^h_{ik} g_{jh} - \Gamma^h_{jk} g_{ih}, \qquad \mathcal{C}_{ijk} = \mathcal{C}_{jik} = \mathcal{C}_{kij},
	\end{align*}
	where $\partial_i := \frac{\partial}{\partial x^i}$ and $\Gamma^i_{jk}$ are the Christoffel symbols of the connection $\nabla$.
	 A triple $(M, g, \nabla)$ is called a {statistical manifold} if $\nabla$ is a torsion-free Codazzi connection. In particular, when the cubic tensor field $\mathcal{C}$ vanishes, the connection $\nabla$ reduces to the Levi-Civita connection $\widehat{\nabla}$ of $g$.
It is worth noting that if $(M, g, \nabla)$ is a statistical manifold, then its dual $(M, g, \nabla^*)$ is also a statistical manifold. Moreover, their cubic tensors satisfy the relation
\[
\mathcal{C}^*(X, Y, Z) := (\nabla^*_X g)(Y, Z) = -\mathcal{C}(X, Y, Z), \quad \forall X, Y, Z \in \mathfrak{X}(M).
\]

For a statistical structure $(g, \nabla)$, we define a $(1,2)$-tensor field $K: \mathfrak{X}(M) \times \mathfrak{X}(M) \rightarrow \mathfrak{X}(M)$ by
\begin{align} \label{L33}
K_X Y = \nabla_X Y - \widehat{\nabla}_X Y, \quad \forall X, Y \in \mathfrak{X}(M),
\end{align}
where $\widehat{\nabla}$ is the Levi-Civita connection of $g$. Then, the tensor $K$ satisfies the following symmetry properties:
\begin{align} \label{def1}
K_X Y = K_Y X, \qquad g(K_X Y, Z) = g(Y, K_X Z).
\end{align}
Conversely, if a Riemannian metric $g$ and a $(1,2)$-tensor $K$ satisfy \eqref{def1}, the affine connection defined by $\nabla := \widehat{\nabla} + K$ becomes a statistical connection.
Moreover, the skewness tensor is related to the dual connection $\nabla^*$ by
\[
K = \widehat{\nabla} - \nabla^* = \frac{1}{2}(\nabla - \nabla^*),
\]
and the cubic tensor $\mathcal{C}$ satisfies
\begin{align} \label{M4}
\mathcal{C}(X, Y, Z) = (\nabla_X g)(Y, Z) = -2g(K_X Y, Z), \quad \forall X, Y, Z \in \mathfrak{X}(M).
\end{align}
Furthermore, the Levi-Civita derivative of the cubic tensor is given by
\begin{align*} 
(\widehat{\nabla}_X \mathcal{C})(Y, Z, W) = -2g\big( (\widehat{\nabla}_X K)(Y, Z), W \big).
\end{align*}
Let $(M, g, \nabla)$ be a statistical manifold. The curvature tensor associated with the affine connection $\nabla$ is defined by
\begin{align} \label{2N}
R^\nabla(X, Y)Z := \nabla_X \nabla_Y Z - \nabla_Y \nabla_X Z - \nabla_{[X, Y]} Z,
\end{align}
for all vector fields $X, Y, Z \in \mathfrak{X}(M)$.
For simplicity, we denote the curvature tensors of the connections $\nabla$, $\nabla^*$, and the Levi-Civita connection $\widehat{\nabla}$ by $R$, $R^*$ and $\widehat{R}$, respectively.
The curvature tensor $R$ satisfies the following identities   \cite{CU}:
\begin{align*}
&R(X, Y, Z, W)  = -R(Y, X, Z, W), \\
&R(X, Y, Z, W) = -R^*(X, Y, W, Z), \\
&R(X, Y, Z, W) + R(Y, Z, X, W) + R(Z, X, Y, W) = 0, 
\end{align*}
where $R(X, Y, Z, W) = g(R(X, Y)Z, W)$.
Moreover, the curvature tensor $R$ can be expressed in terms of the Levi-Civita curvature and the skewness tensor $K$ as follows:
\begin{align*} 
R(X, Y)Z &= \widehat{R}(X, Y)Z + (\widehat{\nabla}_X K)(Y, Z) - (\widehat{\nabla}_Y K)(X, Z) + [K_X, K_Y]Z \\
&= \widehat{R}(X, Y)Z + (\nabla_X K)(Y, Z) - (\nabla_Y K)(X, Z) - [K_X, K_Y]Z. \nonumber
\end{align*}
 \section{Generalized Cheeger-Gromoll Type Metrics on Statistical Manifolds}
 
 Let $\nabla$ be a statistical connection on a manifold $M$, and let $TM$ denote its tangent bundle with the natural projection $\pi: TM \rightarrow M$. The differential of the projection, $\pi_*: TTM \rightarrow TM$, allows us to split the tangent bundle of $TM$ into horizontal and vertical subbundles:
 \begin{align*}
 TTM = HTM \oplus VTM,
 \end{align*}
 where the vertical distribution is $VTM := \ker \pi_*$ and $HTM$ is the complementary horizontal distribution defined by $\nabla$. 
 
 Given a local coordinate system $(U, x^i)$ on $M$, we get an induced coordinate system $(\pi^{-1}(U), x^i, u^i)$ on $TM$, where $x^i$ are the base coordinates and $u^i$ are fiber (directional) coordinates. Let $\Gamma^k_{ij}$ be the Christoffel symbols of $\nabla$. Then the adapted frame on $TM$ is
 \begin{align}\label{Mo3}
 \delta_i := \frac{\delta}{\delta x^i} = \frac{\partial}{\partial x^i} - u^r \Gamma_{ir}^k \frac{\partial}{\partial u^k}, \quad
 \partial_{\bar{i}} := \frac{\partial}{\partial u^i},
 \end{align}
 which span $H_u TM$ and $V_u TM$, respectively. The dual coframe is $\{dx^i, \delta u^i\}$, where
 \[
 \delta u^i := du^i + u^r \Gamma^i_{rk} dx^k.
 \]
 For any $X \in \mathfrak{X}(M)$, we denote by $X^H$ and $X^V$ its horizontal and vertical lifts to $TM$.
 
 The Lie brackets of the adapted frame satisfy
 \begin{align} \label{L5}
 [X^H, Y^H] = [X, Y]^H - (R(X, Y)u)^V, \quad 
 [X^H, Y^V] = (\nabla_X Y)^V, \quad 
 [X^V, Y^V] = 0,
 \end{align}
 for all $X, Y \in \mathfrak{X}(M)$.
Define the \emph{energy density} of a vector $u$ at $x$ by
 \begin{align*}
 \tau := g_x(u, u).
 \end{align*}
 
 If $(M, g, \nabla)$ is a statistical manifold, the following differential relations hold for any $X, Y \in \mathfrak{X}(M)$:
 \begin{align} \label{Nour1}
 \begin{cases}
 X^H(g(u, u) \circ \pi) = -2g(K_X u, u) \circ \pi, \\
 X^H(g(Y, u) \circ \pi) = (g(\widehat{\nabla}_X Y, u) - g(Y, K_X u)) \circ \pi, \\
 X^V(g(u, u) \circ \pi) = 2g(X, u) \circ \pi, \\
 X^V(g(Y, u) \circ \pi) = g(X, Y) \circ \pi.
 \end{cases}
 \end{align}
 
 Following \cite{BL}, we define a two-parameter family of Riemannian metrics $g_{p,q}$ on the tangent bundle $TM$, called \emph{generalized Cheeger--Gromoll type metrics}, by
  \begin{align} \label{L20}
 \begin{cases}
 g_{p,q(x,u)}(X^H, Y^H) = g_x(X, Y), \\
 g_{p,q(x,u)}(X^H, Y^V) = 0, \\
 g_{p,q(x,u)}(X^V, Y^V) = \dfrac{1}{\alpha^p} \Big( g_x(X, Y) + q \, g_x(X, u) g_x(Y, u) \Big),
 \end{cases}
 \end{align}
 where $\alpha := 1 + \tau$. In particular, $p = q = 0$ gives the \emph{Sasaki metric} ${}^{S}g$, while $p = q = 1$ gives the classical \emph{Cheeger-Gromoll metric} ${}^{CG}g$.
Define the open and boundary domains associated with $q$:
 \[
 BM_q := \{ (x, u) \in TM \mid q \tau > -1 \}, \qquad
 SM_q := \{ (x, u) \in TM \mid q \tau = -1 \}.
 \]
 If $q \geq 0$, then $BM_q = TM$ and $SM_q = \emptyset$. If $q < 0$, then $BM_q$ forms a Riemannian ball bundle of $g_{p,q}$ and $SM_q$ is its sphere bundle boundary.
 
 We next compute some fundamental properties of the generalized Cheeger-Gromoll metric on the tangent bundle:
 \begin{proposition}
 	On  a statistical manifold $(M,g, \nabla)$ with 
  the tangent bundle $TM$ equipped with the Cheeger-Gromoll type metric $g_{p,q}$, we have 
  	\begin{align*}
 	g_{p,q}(X^V,u^V)=&	\frac{1+q\tau}{\alpha^p}g(X,u), \\
 	X^Hg_{p,q}(Y^V,Z^V)= &\frac{2p}{\alpha}g(K_Xu,u)g_{p,q}(Y^V,Z^V)-2g_{p,q}((K_XY)^V,Z^V)+g_{p,q}((\nabla_XY)^V,Z^V) \\
 	&\quad +g_{p,q}(Y^V,( \nabla_XZ)^V)-\frac{2q}{\alpha^p}g(Y,u)g(K_XZ,u),\nonumber\\
 	X^Vg_{p,q}(Y^V,Z^V)=&-\frac{2p}{\alpha}g(X,u)g_{p,q}(Y^V,Z^V)+\frac{q}{\alpha^p}\{g(Y,X)g(Z,u)+g(Z,X)g(Y,u)\}.
 	\end{align*}
 	 \end{proposition}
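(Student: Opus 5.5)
The plan is to evaluate everything pointwise from the definition \eqref{L20} of $g_{p,q}$, treating it as a function on $TM$. We write
\[
g_{p,q}(Y^V,Z^V)=\alpha^{-p}\big(g(Y,Z)\circ\pi+q\,(g(Y,u)\circ\pi)(g(Z,u)\circ\pi)\big),
\]
and differentiate each factor with the Leibniz rule, using the relations \eqref{Nour1}. The first identity is immediate: here $u^V$ is the canonical vertical (Liouville) field, so \eqref{L20} gives $\alpha^{-p}(g(X,u)+q\,g(X,u)\tau)=\frac{1+q\tau}{\alpha^p}g(X,u)$.

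For the vertical derivative we proceed factor by factor.
\begin{itemize}
\item $X^V$ annihilates functions pulled back from $M$, so $X^V(g(Y,Z)\circ\pi)=0$.
\item By \eqref{Nour1}, $X^V\alpha=2g(X,u)$, hence $X^V\alpha^{-p}=-\frac{2p}{\alpha}g(X,u)\alpha^{-p}$.
\item By \eqref{Nour1}, $X^V g(Y,u)=g(X,Y)$.
\end{itemize}
The Leibniz rule then gives $-\frac{2p}{\alpha}g(X,u)g_{p,q}(Y^V,Z^V)+\frac{q}{\alpha^p}\{g(X,Y)g(Z,u)+g(X,Z)g(Y,u)\}$, which is the third formula.

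The horizontal derivative needs more care. First, $X^H\alpha=-2g(K_Xu,u)$, so $X^H\alpha^{-p}=\frac{2p}{\alpha}g(K_Xu,u)\alpha^{-p}$; this produces the first term. Next, $X^H(g(Y,Z)\circ\pi)=Xg(Y,Z)$, and we rewrite it through the Levi-Civita connection as $g(\widehat\nabla_XY,Z)+g(Y,\widehat\nabla_XZ)$. By \eqref{Nour1}, $X^Hg(Y,u)=g(\widehat\nabla_XY,u)-g(Y,K_Xu)$, and likewise for $Z$. Collecting terms,
\[
\alpha^pX^Hg_{p,q}(Y^V,Z^V)-\tfrac{2p}{\alpha}g(K_Xu,u)\,\alpha^p g_{p,q}(Y^V,Z^V)
\]
equals $\alpha^p\big[g_{p,q}((\widehat\nabla_XY)^V,Z^V)+g_{p,q}(Y^V,(\widehat\nabla_XZ)^V)\big]-q\{g(Z,u)g(Y,K_Xu)+g(Y,u)g(Z,K_Xu)\}$.

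The remaining step, and the main bookkeeping obstacle, is to convert $\widehat\nabla$ into $\nabla=\widehat\nabla+K$ by \eqref{L33}. This gives
\[
g_{p,q}((\widehat\nabla_XY)^V,Z^V)+g_{p,q}(Y^V,(\widehat\nabla_XZ)^V)=g_{p,q}((\nabla_XY)^V,Z^V)+g_{p,q}(Y^V,(\nabla_XZ)^V)-g_{p,q}((K_XY)^V,Z^V)-g_{p,q}(Y^V,(K_XZ)^V).
\]
Expanding $g_{p,q}(Y^V,(K_XZ)^V)$ by \eqref{L20}, the symmetry \eqref{def1} gives $g(Y,K_XZ)=g(K_XY,Z)$. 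The $q$-part of this term is $q\,g(Y,u)g(K_XZ,u)\alpha^{-p}$. Likewise, the $q$-part of $g_{p,q}((K_XY)^V,Z^V)$ is $q\,g(K_XY,u)g(Z,u)\alpha^{-p}$, and $g(K_XY,u)=g(Y,K_Xu)$.

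We then match these $q$-terms against $-q\{g(Z,u)g(Y,K_Xu)+g(Y,u)g(Z,K_Xu)\}\alpha^{-p}$. One must check carefully whether the result is $-2g_{p,q}((K_XY)^V,Z^V)$ together with exactly one leftover $-\frac{2q}{\alpha^p}g(Y,u)g(K_XZ,u)$, as the statement claims. Naively the symmetric expression suggests that the $q$-parts double, and the stated asymmetric form arises from regrouping the $g(K_XY,u)g(Z,u)$ terms into the $-2g_{p,q}((K_XY)^V,Z^V)$ term. I would verify this term-by-term accounting explicitly, tracking which cross terms are absorbed into $g_{p,q}((K_XY)^V,Z^V)$ and which remain explicit; this is the only delicate point of the proof.
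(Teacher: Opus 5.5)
Your proof follows the paper's route: the paper says only that the identities "follow directly from \eqref{Nour1} and \eqref{L20}", and your Leibniz-rule computation is exactly that. Every intermediate formula you wrote is correct, including $X^H\alpha^{-p}=\frac{2p}{\alpha}g(K_Xu,u)\alpha^{-p}$ and the expression for $X^H(\alpha^pg_{p,q}(Y^V,Z^V))$ through $\widehat\nabla$.

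The only thing missing is the last line, which you left unverified. It does close, and the doubling you were worried about is exactly what produces the stated form. The $q$-parts of $g_{p,q}((K_XY)^V,Z^V)$ and $g_{p,q}(Y^V,(K_XZ)^V)$ are $\frac{q}{\alpha^p}g(K_XY,u)g(Z,u)$ and $\frac{q}{\alpha^p}g(Y,u)g(K_XZ,u)$. These are the same two terms that already come from $X^Hg(Y,u)$ and $X^Hg(Z,u)$, so each appears twice. Using also $g(Y,K_XZ)=g(K_XY,Z)$, all $K$-terms other than the $\frac{2p}{\alpha}$-term add up to
\begin{align*}
&-\frac{1}{\alpha^p}\Big\{2g(K_XY,Z)+2q\,g(K_XY,u)g(Z,u)+2q\,g(Y,u)g(K_XZ,u)\Big\}\\
&\qquad=-2g_{p,q}((K_XY)^V,Z^V)-\frac{2q}{\alpha^p}g(Y,u)g(K_XZ,u),
\end{align*}
by \eqref{L20}. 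The asymmetric expression in the statement is therefore just one way of writing a quantity that is symmetric in $Y,Z$. You could equally write it as $-2g_{p,q}(Y^V,(K_XZ)^V)-\frac{2q}{\alpha^p}g(K_XY,u)g(Z,u)$. With this line added, your proof is complete.
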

\begin{proof}
		This follows directly from \eqref{Nour1} and \eqref{L20}.
	\end{proof}
\section{The Levi-Civita Connection and the curvature tensor of $(TM, g_{p,q})$}
In this section, we study the Levi-Civita connection $\nabla^{p,q}$ of the tangent bundle $TM$ equipped with the Cheeger-Gromoll type metric $g_{p,q}$, which is induced by a statistical manifold $(M,g,\nabla)$. We provide explicit formulas for $\nabla^{p,q}$ in terms of horizontal and vertical lifts and examine the influence of the properties of the statistical connection $\nabla$ on the geometry of $TM$.
 In particular, we analyze the behavior of geodesics, the conditions for totally geodesic fibers, and the incompressibility of the geodesic flow. These results highlight the rich geometric structure arising from Cheeger-Gromoll type metrics on tangent bundles. 	In the following, we compute the curvature tensor of the Levi--Civita connection $\nabla^{p,q}$ for the Cheeger--Gromoll type metric $g_{p,q}$ on $TM$. 

We compute the Levi-Civita connection $\nabla^{p,q}$ of $(TM,g_{p,q})$ induced by the statistical manifold $(M,g,\nabla)$. Its explicit expression is obtained from the classical formula
\begin{align}\label{L34}
2g_{p,q}(\nabla^{p,q}_{\bar{X}}\bar{Y},\bar{Z}) &= \bar{X}\, g_{p,q}(\bar{Y},\bar{Z}) + \bar{Y}\, g_{p,q}(\bar{X},\bar{Z}) - \bar{Z}\, g_{p,q}(\bar{X},\bar{Y}) \\
&\quad + g_{p,q}(\bar{Z},[\bar{X},\bar{Y}]) - g_{p,q}(\bar{Y},[\bar{X},\bar{Z}]) - g_{p,q}(\bar{X},[\bar{Y},\bar{Z}]), \nonumber
\end{align}
for any vector fields $\bar{X},\bar{Y},\bar{Z} \in \chi(TM)$.
\begin{remark} \label{LM6'}
	Let $(M, g, \nabla)$ be a statistical manifold. Define the $(1,3)$-tensor fields $\widetilde{R}$ and $\widetilde{R}^*$ by
	\begin{align} \label{M3}
	g(\widetilde{R}(X, Y)Z, W) := R(Z, W, X, Y), \quad g(\widetilde{R}^*(X, Y)Z, W) := R^*(Z, W, X, Y),
	\end{align}
	for all $X, Y, Z, W \in \mathfrak{X}(M)$. Then, the following symmetries hold:
	\begin{align*}
	g(\widetilde{R}(X, Y)Z, W) = -g(\widetilde{R}(X, Y)W, Z) = -g(\widetilde{R}^*(Y, X)Z, W).
	\end{align*}
\end{remark}
\begin{theorem}\label{L2}
	Let $(M,g, \nabla)$ be a statistical manifold
	and ${{\nabla}^{p,q}}$  be the Levi-Civita connection of the tangent bundle $TM$ equipped with the Cheeger-Gromoll type metric $g_{p,q}$. Then ${{\nabla}^{p,q}}$ at $(x,u)\in BM_q$ satisfies
	\begin{align*}
&(i)\ \	{{\nabla}}^{p,q}_{X^H}Y^H=(\nabla_XY)^H-(K_XY)^H-\frac{1}{2}(R(X,Y)u)^V,	\\
&(ii)\	{{\nabla}}^{p,q}_{X^H}Y^V=\frac{1}{2\alpha^p}\{(\widetilde R(u,Y)X)^H+qg(Y,u)(\widetilde R(u,u)X)^H\}+\frac{p}{\alpha}g(K(X,u),u)Y^V-(K(X,Y))^V\\
&\ \ \ \ \ \ \ \ \ \ \ \ \ \ \ \  +(\nabla_XY)^V-qg(Y,u)\{(K(X,u))^V-\frac{q}{1+q\tau}g(K(X,u),u)u^V\},\\
&(iii)\	{{\nabla}}^{p,q}_{X^V}Y^H=\frac{1}{2\alpha^p}\{(\widetilde R(u,X)Y)^H+qg(X,u)(\widetilde R(u,u)Y)^H\}+\frac{p}{\alpha}g(K(Y,u),u)X^V-(K(X,Y))^V\\
&\ \ \ \ \ \ \ \ \ \ \ \ \ \ \ \  -qg(X,u)\{(K(Y,u))^V-\frac{q}{1+q\tau}g(K(Y,u),u)u^V\},\\
&
(iv)\	{{\nabla}}^{p,q}_{X^V}Y^V=-\frac{p}{\alpha}
(K_uu)^Hg_{p,q}(Y^V,X^V)+\frac{1}{\alpha^p}(K_XY)^H+\frac{q}{\alpha^p}(K_Yu)^Hg(X,u)+\frac{q}{\alpha^p}(K_Xu)^Hg(Y,u)\\
&
\ \ \ \ \ \ \ \ \ \ \ \ \ \ \ \ \ \ \ \ \ \ \ \ \  -\frac{p}{\alpha}(g(X,u)Y^V+g(Y,u)X^V)+\mathcal{M}g(X,Y)u^V  +\mathcal{N}g(X,u)g(Y,u)u^V,
	\end{align*}
	where $\mathcal{M}=\frac{q\alpha+p}{\alpha(1+q\tau)}$ and $\mathcal{N}=\frac{pq}{\alpha(1+q\tau)}$.
	\end{theorem}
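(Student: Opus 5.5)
The plan is to apply the Koszul formula \eqref{L34} to lifted vector fields $\bar X,\bar Y,\bar Z\in\{X^H,X^V\}$ and evaluate each term with the bracket relations \eqref{L5}, the derivative rules \eqref{Nour1}, and the three identities of the Proposition preceding this section. Since $g_{p,q}$ makes $HTM$ and $VTM$ orthogonal, each covariant derivative in (i)--(iv) is determined by its pairings with $Z^H$ and with $Z^V$. I will compute these separately and then solve for the horizontal and vertical components.

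For (i), pairing with $Z^H$ involves only $g(X,Y)\circ\pi$-type functions differentiated by horizontal lifts, which gives ordinary Koszul terms for $g$ written through $\nabla$. Since $X^H(g(Y,Z)\circ\pi)=g(\nabla_XY,Z)+g(Y,\nabla_XZ)-2g(K_XY,Z)$ by \eqref{M4}, the horizontal part is $(\widehat\nabla_XY)^H=(\nabla_XY-K_XY)^H$. Pairing with $Z^V$, only the bracket term $g_{p,q}(Z^V,-(R(X,Y)u)^V)$ and the vertical derivatives survive, and the latter cancel, which gives $-\tfrac12(R(X,Y)u)^V$. I will need to check that the $q$-part of the vertical metric combines correctly here, using $g_{p,q}(X^V,u^V)$ from the Proposition.

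For (ii) and (iii), the pairing with $Z^H$ produces $\tfrac12 g_{p,q}(Y^V,(R(X,Z)u)^V)$. Expanding by \eqref{L20} gives $\tfrac1{2\alpha^p}\big(R(X,Z,u,Y)+q\,g(Y,u)R(X,Z,u,u)\big)$. Rewriting this via $\widetilde R$ from \eqref{M3} and Remark \ref{LM6'} yields the horizontal terms $\tfrac1{2\alpha^p}\{(\widetilde R(u,Y)X)^H+q g(Y,u)(\widetilde R(u,u)X)^H\}$. The pairing with $Z^V$ yields $\tfrac12$ of $X^Hg_{p,q}(Y^V,Z^V)$ plus bracket terms, which I evaluate with the second identity of the Proposition. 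The result is an expression $g_{p,q}(W,Z^V)$ only up to the term $-\tfrac{q}{\alpha^p}g(Y,u)g(K_XZ,u)$.

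To write that term as $g_{p,q}(\,\cdot\,,Z^V)$, I invert the vertical metric. Its inverse acts as $A\mapsto \alpha^p\big(A-\tfrac{q}{1+q\tau}g(A,u)u\big)$, which is valid on $BM_q$. Applying this to the covector $Z\mapsto g(K_Xu,Z)$ produces the correction $-qg(Y,u)\{(K_Xu)^V-\tfrac{q}{1+q\tau}g(K_Xu,u)u^V\}$. The formula for (iii) then follows from (ii) by torsion-freeness, $\nabla^{p,q}_{X^V}Y^H=\nabla^{p,q}_{Y^H}X^V-(\nabla_YX)^V$, using $[X^H,Y^V]=(\nabla_XY)^V$.

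For (iv), the brackets vanish. The horizontal pairing gives $-\tfrac12 Z^Hg_{p,q}(X^V,Y^V)$ plus bracket terms $\tfrac12\{g_{p,q}(Y^V,(\nabla_ZX)^V)+g_{p,q}(X^V,(\nabla_ZY)^V)\}$, which cancel the $\nabla$-parts of the Proposition. What remains is $-\tfrac p\alpha g(K_Zu,u)g_{p,q}(X^V,Y^V)+\tfrac1{\alpha^p}g(K_XY,Z)+\tfrac q{\alpha^p}(\dots)$, and the symmetries \eqref{def1} turn this into the stated horizontal part. The vertical pairing uses the third identity of the Proposition cyclically, which gives $-\tfrac p\alpha\big(g(X,u)g_{p,q}(Y^V,Z^V)+g(Y,u)g_{p,q}(X^V,Z^V)\big)+\tfrac p\alpha g(Z,u)g_{p,q}(X^V,Y^V)+\tfrac q{\alpha^p}g(X,Y)g(Z,u)$. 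I then convert the leftover $g(Z,u)$ covectors to $g_{p,q}(u^V,Z^V)\,\alpha^p/(1+q\tau)$, which produces the coefficients $\mathcal M=\frac{q\alpha+p}{\alpha(1+q\tau)}$ and $\mathcal N=\frac{pq}{\alpha(1+q\tau)}$.

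I expect the main obstacle to be this inversion bookkeeping in (ii) and (iv): keeping the $q$-dependent rank-one terms consistent and confirming the exact coefficients of $u^V$. I will check it by verifying that the final expressions reproduce the pairings with $Z^V$.
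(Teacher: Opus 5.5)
Your proposal is correct and follows essentially the same route as the paper: the Koszul formula \eqref{L34} with the brackets \eqref{L5} and the derivative identities of the preceding Proposition, separate pairings with $Z^H$ and $Z^V$, inversion of the vertical metric on $BM_q$, and (iii) obtained from (ii) by torsion-freeness. For the inversion, the paper first pairs with $u^V$ to find $g(W,u)$ and then solves for $W$, which amounts to your explicit inverse. In (i) no $q$-bookkeeping is actually needed, since the Koszul pairing with $Z^V$ is already $-g_{p,q}\big((R(X,Y)u)^V,Z^V\big)$ and so gives the stated $-\tfrac12(R(X,Y)u)^V$ directly. The paper's displayed intermediate for this step has a sign and factor slip, whereas your computation is correct.
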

\begin{proof}
	Using  \eqref{M2'}, \eqref{L5} and \eqref{L20} in \eqref{L34}, we first obtain the horizontal part as follows:
	\[
	2g_{p,q}(\nabla^{p,q}_{X^H} Y^H, Z^H) = \mathcal{C}(X,Y,Z) + 2g(\nabla_X Y, Z) = 2g(-K_X Y + \nabla_X Y, Z),
	\]
	which can be rewritten as
	\[
g_{p,q}(\nabla^{p,q}_{X^H} Y^H, Z^H) = 	g_{p,q}\big((-K_X Y)^H + (\nabla_X Y)^H, Z^H \big).
	\]
	For the vertical part, we have
	\[
	2g_{p,q}(\nabla^{p,q}_{X^H} Y^H, Z^V) = 2g_{p,q}\big(Z^V, (R(X,Y) u)^V \big).
	\]
	This completes the proof of part (i).
	For the horizontal part of (ii), using \eqref{M3} we get
	\[
	2g_{p,q}(\nabla^{p,q}_{X^H} Y^V, Z^H) = -\frac{1}{\alpha^p} \big\{ g(Y, R(Z,X) u) + q g(Y,u) g(R(Z,X) u, u) \big\},
	\]
	which can be expressed in terms of the metric \(g_{p,q}\) as
	\[
2g_{p,q}(\nabla^{p,q}_{X^H} Y^V, Z^H) =	\frac{1}{\alpha^p} g_{p,q} \big( (\widetilde{R}(u,Y) X)^H + q g(Y,u) (\widetilde{R}(u,u) X)^H, Z^H \big).
	\]
For the vertical part we have
	\[
	2g_{p,q}(\nabla^{p,q}_{X^H} Y^V, Z^V) = g_{p,q} \Big( \frac{2p}{\alpha} g(K(X,u), u) Y^V - 2 (K(X,Y))^V + 2 (\nabla_X Y)^V, Z^V \Big) - \frac{2q}{\alpha^p} g(Y,u) g(K(X,u), Z).
	\]
	Since the horizontal and vertical distributions are orthogonal with respect to \(g_{p,q}\), the first identity gives the horizontal part of \(\nabla^{p,q}_{X^H} Y^V\) and the second identity gives the vertical part; that is,
	\[
	\nabla^{p,q}_{X^H} Y^V = W_1^H + W_2^V,
	\]
	whit
	\[
	W_1 = \frac{1}{2 \alpha^p} \big\{ \widetilde{R}(u,Y) X + q g(Y,u) \widetilde{R}(u,u) X \big\},
	\]
	and
	\[
	W_2 = \frac{p}{\alpha} g(K(X,u), u) Y - K(X,Y) + \nabla_X Y + W,
	\]
where
	\begin{align}\label{M1}
	g_{p,q}(W^V, Z^V) = - \frac{q}{\alpha^p} g(Y,u) g(K(X,u), Z).
	\end{align}
		By substituting \(Z = u\) in the above equation, we obtain
	\[
	g_{p,q}(W^V, u^V) = - \frac{q}{\alpha^p} g(Y,u) g(K(X,u), u),
	\]
	which implies
	\[
	\frac{1 + q \tau}{\alpha^p} g(W, u) = - \frac{q}{\alpha^p} g(Y,u) g(K(X,u), u).
	\]
	Thus, it follows
	\[
	g(W,u) = - \frac{q}{1 + q \tau} g(Y,u) g(K(X,u), u).
	\]
		On the other hand, from \eqref{M1} we have
	\[
	\frac{1}{\alpha^p} \big\{ g(W,Z) + q g(W,u) g(Z,u) \big\} = - \frac{q}{\alpha^p} g(Y,u) g(K(X,u), Z).
	\]
	Substituting the value of \(g(W,u)\) into the above equation yields
	\[
	g(W,Z) = \frac{q^2}{1 + q \tau} g(Y,u) g(K(X,u), u) g(u,Z) - q g(Y,u) g(K(X,u), Z).
	\]
	Therefore, we have
	\[
	W = - q g(Y,u) \Big( K(X,u) - \frac{q}{1 + q \tau} g(K(X,u), u) u \Big),
	\]
this completes the proof of (ii). According to (\ref{L5}) and (ii), we have (iii).
			For (iv), in the same way, it follows
	\[
	2 g_{p,q}(\nabla^{p,q}_{X^V} Y^V, Z^H) = - Z^H g_{p,q}(X^V, Y^V) - g_{p,q}(X^V, [Y^V, Z^H]) - g_{p,q}(Y^V, [X^V, Z^H]).
	\]
Using	(\ref{L5}) and (\ref{L20}), the last equation implies
	\[
	2 g_{p,q}(\nabla^{p,q}_{X^V} Y^V, Z^H) =	\frac{p}{\alpha} \mathcal{C}(Z,u,u) g_{p,q}(Y^V, X^V) - \frac{1}{\alpha^p} \mathcal{C}(X,Y,Z) - \frac{q}{\alpha^p} \mathcal{C}(Z,Y,u) g(X,u) - \frac{q}{\alpha^p} g(Y,u) \mathcal{C}(X,Z,u).
	\]
Applying \eqref{M4}, we obtain the horizontal part of (iv).
					Similarly, for the vertical part we have
	\[
	2 g_{p,q}(\nabla^{p,q}_{X^V} Y^V, Z^V) = X^V g_{p,q}(Y^V, Z^V) + Y^V g_{p,q}(X^V, Z^V) - Z^V g_{p,q}(X^V, Y^V),
	\]
	which gives
	\begin{align*}
	2 g_{p,q}(\nabla^{p,q}_{X^V} Y^V, Z^V) = &2 g_{p,q} \Big( - \frac{p}{\alpha} ( g(X,u) Y^V + g(Y,u) X^V ) + \frac{q \alpha + p}{\alpha (1 + q \tau)} g(X,Y) u^V \\
	&+ \frac{p q}{\alpha (1 + q \tau)} g(X,u) g(Y,u) u^V, Z^V \Big).
	\end{align*}
	This completes the proof.
\end{proof}
\begin{lemma}
	In a statistical manifold \((M,g,\nabla)\), for any \(X \in \chi(M)\), we have:
	\begin{align*}
	\nabla^{p,q}_{X^H} u^V =\; & \frac{1 + q \tau}{2\alpha^p} (\widetilde{R}(u,u)X)^H + \frac{p}{\alpha} g(K_X u, u) u^V - (1 + q \tau)(K_X u)^V \hspace{1.3cm}\\
	& + \frac{q^2 \tau}{1 + q \tau} g(K_X u, u) u^V,
	\end{align*}
	\begin{align*}
	\nabla^{p,q}_{X^V} u^V =\; & \frac{-p(1 + q\tau) + q\alpha}{\alpha^{p+1}} g(X,u)(K_u u)^H + \frac{1 + q\tau}{\alpha^p} (K_X u)^H + \left(1 - \frac{p \tau}{\alpha} \right) X^V \\
	& + \left( \mathcal{M} + \tau \mathcal{N} - \frac{p}{\alpha} \right) g(X,u) u^V.
	\end{align*}
\end{lemma}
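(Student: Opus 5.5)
Here $u^V$ is the canonical vertical vector field on $TM$, $(x,u)\mapsto u^V_{(x,u)}=u^i\partial_{\bar i}$. It is not the vertical lift of a fixed vector field on $M$, because its coefficients depend on the fibre coordinates. So Theorem~\ref{L2}(ii) and (iv) cannot be applied with $Y=u$ directly. The plan is to write $u^V=u^i(\partial_i)^V$ in the induced chart and use the Leibniz rule:
\[
\nabla^{p,q}_{\bar X}u^V=\bar X(u^i)\,(\partial_i)^V+u^i\,\nabla^{p,q}_{\bar X}(\partial_i)^V .
\]
The coordinate functions satisfy $X^V(u^i)=X^i$ and, by \eqref{Mo3}, $X^H(u^i)=-u^r\Gamma^i_{jr}X^j$. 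Hence
\[
\nabla^{p,q}_{X^V}u^V=X^V+u^i\nabla^{p,q}_{X^V}(\partial_i)^V,\qquad
\nabla^{p,q}_{X^H}u^V=-(\nabla_Xu)^V+u^i\nabla^{p,q}_{X^H}(\partial_i)^V ,
\]
where $\nabla_X u$ is understood pointwise with $u$ extended as a coordinate-constant field. The same correction can be seen invariantly from $[X^H,u^V]=0$ and $[X^V,u^V]=X^V$, together with the torsion-freeness of $\nabla^{p,q}$.

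For the first formula, apply Theorem~\ref{L2}(ii) to $Y=\partial_i$, contract with $u^i$, and add $-(\nabla_Xu)^V$; this exactly cancels the $(\nabla_XY)^V$ term. Then substitute $g(u,u)=\tau$. The horizontal part becomes $\frac{1}{2\alpha^p}\big(\widetilde R(u,u)X+q\tau\,\widetilde R(u,u)X\big)^H=\frac{1+q\tau}{2\alpha^p}(\widetilde R(u,u)X)^H$. The vertical part becomes
\[
\tfrac p\alpha g(K_Xu,u)u^V-(K_Xu)^V-q\tau(K_Xu)^V+\tfrac{q^2\tau}{1+q\tau}g(K_Xu,u)u^V .
\]
This is the claimed expression once $-(1+q\tau)(K_Xu)^V$ is collected.

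For the second formula, apply Theorem~\ref{L2}(iv) with $Y=\partial_i$, contract with $u^i$, and add the term $X^V$ coming from $X^V(u^i)$. Proposition~3.1 gives $g_{p,q}(u^V,X^V)=\frac{1+q\tau}{\alpha^p}g(X,u)$. Collect the horizontal terms $-\frac{p(1+q\tau)}{\alpha^{p+1}}g(X,u)(K_uu)^H$ and $\frac{q}{\alpha^p}g(X,u)(K_uu)^H$ into the coefficient $\frac{-p(1+q\tau)+q\alpha}{\alpha^{p+1}}$. Then collect $\frac{1}{\alpha^p}(K_Xu)^H$ and $\frac{q\tau}{\alpha^p}(K_Xu)^H$ into $\frac{1+q\tau}{\alpha^p}(K_Xu)^H$. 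The vertical terms $X^V-\frac p\alpha\tau X^V$ and $\big(-\frac p\alpha+\mathcal M+\tau\mathcal N\big)g(X,u)u^V$ follow immediately.

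The only delicate step is justifying the extra terms $-(\nabla_Xu)^V$ and $X^V$, i.e.\ treating $u^V$ correctly as a non-lifted field. I would check this against the Lie brackets \eqref{L5} to make sure the signs match the convention in \eqref{Mo3}. Everything after that is bookkeeping with $g(u,u)=\tau$ and $\alpha=1+\tau$.
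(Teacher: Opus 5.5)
Your proposal is correct and follows the paper's proof. You expand $u^V=u^i\partial_{\bar i}$ by the Leibniz rule and use $X^H(u^i)\partial_{\bar i}=-u^i(\nabla_X\partial_i)^V$ to cancel the $(\nabla_XY)^V$ term of Theorem~\ref{L2}(ii); when you contract Theorem~\ref{L2}(iv) you also add the term $X^V=X^V(u^i)\partial_{\bar i}$, which the paper uses only implicitly under the word ``directly'' (your invariant cross-check via $[X^H,u^V]=0$, $[X^V,u^V]=X^V$ and torsion-freeness also works and gives the same formulas).
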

\begin{proof}
Since
	\begin{align}\label{Mo4}
	\nabla^{p,q}_{X^H} u^V 
	&= \nabla^{p,q}_{X^H} \left( u^i \frac{\partial}{\partial u^i} \right) 
	= X^H(u^i) \frac{\partial}{\partial u^i} + u^i \nabla^{p,q}_{X^H} \left( \frac{\partial}{\partial u^i} \right),
	\end{align}
	thus using part (ii) of Theorem \ref{L2}, it follows
	\begin{align}\label{Mo5}
	u^i \nabla^{p,q}_{X^H} \left( \frac{\partial}{\partial u^i} \right) 
	=&\; u^i \Bigg[ \frac{1}{2\alpha^p} \Big( (\widetilde{R}(u, \tfrac{\partial}{\partial x^i})X)^H + q g\left( \tfrac{\partial}{\partial x^i}, u \right)(\widetilde{R}(u,u)X)^H \Big) \\
	&\quad + \frac{p}{\alpha} g(K(X,u), u) \left( \frac{\partial}{\partial x^i} \right)^V - (K(X, \tfrac{\partial}{\partial x^i}))^V \nonumber\\
	&\quad + (\nabla_X \tfrac{\partial}{\partial x^i})^V - q g\left( \tfrac{\partial}{\partial x^i}, u \right) \left( (K(X,u))^V - \frac{q}{1 + q\tau} g(K(X,u), u) u^V \right) \Bigg].\nonumber
	\end{align}
	On the other hand, from (\ref{Mo3}) we have
	\begin{align}\label{Mo6}
	X^H(u^i) \frac{\partial}{\partial u^i}=-u^i (\nabla_X \tfrac{\partial}{\partial x^i})^V.
	\end{align}
Substituting \eqref{Mo5} and \eqref{Mo6} in \eqref{Mo4}, and simplifying, we obtain the first identity.
		Similarly, applying part (iv) of Theorem \ref{L2}, we find \(\nabla^{p,q}_{X^V} u^V\) directly and obtain the second identity.
	\end{proof}
\begin{example}\label{M111}
	We consider the exponential distribution with parameter \(\xi\) given by
	\[
	p(x; \xi) = \xi e^{-\xi x}, \quad \xi \in (0, \infty), \quad x \in [0, \infty).
	\]
	The associated statistical model \( M = \{ p(x; \xi) \} \) is one-dimensional. The Fisher information metric is computed as
	\[
	g_{11} = g(\frac{\partial}{\partial \xi}, \frac{\partial}{\partial \xi}) = \frac{1}{\xi^2}.
	\]
	Also, the Christoffel symbols \(\widehat{\Gamma}^1_{11}\) of the Levi-Civita connection \(\widehat{\nabla}\) and the components \(K^1_{11}\) of the tensor field \(K\) on \(M\) are given by
	\[
	\widehat{\Gamma}^1_{11} = -\frac{1}{\xi}, \quad K^1_{11} = \frac{1}{\xi}.
	\]
	Hence, \((M, g, \nabla = \widehat{\nabla} + K)\) forms a statistical manifold with cubic tensor \(\mathcal{C}_{111} = -\frac{2}{\xi^3}\).
		The Cheeger-Gromoll type metric \(g_{p,q}\) on the tangent bundle \(TM\) is given by
	\[
	g_{p,q}(\delta_1, \delta_1) = \frac{1}{\xi^2}, \quad
	g_{p,q}(\delta_1, \partial_{\bar{1}}) = 0, \quad
	g_{p,q}(\partial_{\bar{1}}, \partial_{\bar{1}}) = \frac{1}{\alpha^p} \{ \frac{1}{\xi^2} + q u_1 u_1 \},
	\]
	where \(u_1 = \frac{u^1}{\xi^2}\).
		Using Theorem \ref{L2}, the connection coefficients are computed as
	\begin{align*}
	\nabla^{p,q}_{\delta_1} \delta_1 &= -\frac{1}{\xi} \delta_1, \\
	\nabla^{p,q}_{\delta_1} \partial_{\bar{1}} &= \frac{q (p-2) (y^1)^4 - \xi^2 (y^1)^2 (1 + 2q - p) - \xi^4}{\xi^5 \alpha (1 + q \tau)} \partial_{\bar{1}} = \nabla^{p,q}_{\partial_{\bar{1}}} \delta_1, \\
	\nabla^{p,q}_{\partial_{\bar{1}}} \partial_{\bar{1}} &= \frac{-p q (y^1)^4 + \xi^2 (y^1)^2 (2 q \alpha - p) + \alpha \xi^4}{\xi^5 \alpha^{p+1}} \delta_1 - \frac{y^1 (\xi^2 (p - q) + q (y^1)^2 (p - 1))}{\xi^4 \alpha (1 + q \tau)} \partial_{\bar{1}}.
	\end{align*}
		In particular, for the cases \(p = q = 0\) and \(p = q = 1\), we have
	\[
	\nabla^{0,0}_{\delta_1} \delta_1 = -\frac{1}{\xi} \delta_1, \quad
	\nabla^{0,0}_{\delta_1} \partial_{\bar{1}} = -\frac{1}{\xi} \partial_{\bar{1}} = \nabla^{0,0}_{\partial_{\bar{1}}} \delta_1, \quad
	\nabla^{0,0}_{\partial_{\bar{1}}} \partial_{\bar{1}} = \frac{1}{\xi} \delta_1,
	\]
	and
	\[
	\nabla^{1,1}_{\delta_1} \delta_1 = -\frac{1}{\xi} \delta_1, \quad
	\nabla^{1,1}_{\delta_1} \partial_{\bar{1}} = -\frac{1}{\xi} \partial_{\bar{1}} = \nabla^{1,1}_{\partial_{\bar{1}}} \delta_1, \quad
	\nabla^{1,1}_{\partial_{\bar{1}}} \partial_{\bar{1}} = \frac{1}{\xi} \delta_1.
	\]
\end{example}

 \begin{proposition}
 	Let $(M, g, \nabla)$ be a statistical manifold and let its tangent bundle $TM$ be equipped with a Cheeger--Gromoll type metric $g_{p,q}$. Then the fibers of $TM$ are totally geodesic if and only if one of the following conditions holds:
 	\begin{enumerate}[i)]
 		\item $p = q = 0$, i.e., $g_{p,q}$ reduces to the Sasaki metric;
 		\item $K = 0$, i.e., $\nabla$ coincides with the Levi-Civita connection $\widehat{\nabla}$ of $(M,g)$.
 	\end{enumerate}
 \end{proposition}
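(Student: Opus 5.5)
The plan is to identify total geodesicity of the fibers with the vanishing of their second fundamental form, and read that off from part (iv) of Theorem \ref{L2}. The fibers $T_xM=\pi^{-1}(x)$ are integral manifolds of the vertical distribution $VTM$. Their tangent fields are spanned by the vertical lifts $X^V$. Since $HTM$ and $VTM$ are $g_{p,q}$-orthogonal, the second fundamental form of a fiber is
\[
h(X^V,Y^V)=\big(\nabla^{p,q}_{X^V}Y^V\big)^{\mathrm{hor}} .
\]
So the fibers are totally geodesic exactly when this horizontal component vanishes for all $X,Y\in\mathfrak{X}(M)$ and all $(x,u)\in BM_q$.

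From part (iv) of Theorem \ref{L2}, I would isolate the horizontal component:
\[
h(X^V,Y^V)=\Big(-\frac{p}{\alpha}g_{p,q}(X^V,Y^V)K_uu+\frac{1}{\alpha^p}K_XY+\frac{q}{\alpha^p}\big(g(X,u)K_Yu+g(Y,u)K_Xu\big)\Big)^H .
\]
Every term is linear in $K$. Hence condition ii), $K=0$, immediately gives $h\equiv 0$, and the fibers are totally geodesic. For condition i) I would set $p=q=0$. The $p$- and $q$-weighted terms then drop out, leaving only $h=(K_XY)^H$. This is the step I expect to be the real obstacle: the Sasaki case does \emph{not} kill the $K_XY$ term by itself. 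So I would first check whether the intended reading of i) is the Sasaki metric over a base where this residual term vanishes, which again means $K=0$ on the relevant directions. I would state explicitly in the proof what is being used there, rather than claim that $p=q=0$ alone suffices.

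For necessity, I would evaluate $h=0$ along the zero section $u=0$, where $\alpha=1$ and $\tau=0$. All $u$-dependent terms vanish and we get $K_XY=0$ for all $X,Y$, that is, $K=0$. This places us in case ii), and it also covers the Sasaki situation. As a consistency check, I would compare with the connection coefficients in Example \ref{M111}. There $\nabla^{0,0}_{\partial_{\bar 1}}\partial_{\bar 1}=\frac1\xi\delta_1\neq 0$ while $K^1_{11}=1/\xi$, which confirms that the residual term in the Sasaki case is exactly the $K$-term. The proof should therefore treat $K=0$ as the operative condition in both alternatives.
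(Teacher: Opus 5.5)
Your computation is correct, and the obstacle you flag is not a gap in your argument: the statement itself is wrong. Part (iv) of Theorem \ref{L2} does contain the term $\frac{1}{\alpha^p}(K_XY)^H$. A direct Koszul computation confirms it. On the zero section all $p$- and $q$-dependent terms vanish, and $g_{p,q}(\nabla^{p,q}_{X^V}Y^V,Z^H)=-\tfrac12(\nabla_Zg)(X,Y)=g(K_XY,Z)$.

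The paper's proof obtains alternative i) only because its condition \eqref{eq:tot_geod} drops this term when transcribing (iv). With the term omitted, the test choices $X=Y\perp u$ and $X=u\perp Y$ give only $pK_uu=0$ and $qK_Yu=0$, and both are vacuous when $p=q=0$. As you noticed, Example \ref{M111} is an explicit counterexample to the sufficiency of i). There one may take $p=q=0$, and $\nabla^{0,0}_{\partial_{\bar 1}}\partial_{\bar 1}=\frac1\xi\delta_1\neq 0$. So the fibers are not totally geodesic for the Sasaki metric when $K\neq 0$.

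You should therefore not hedge about an ``intended reading'' of i). The correct result is that, for every $(p,q)$, the fibers are totally geodesic if and only if $K=0$. Your proposal proves exactly this:
\begin{enumerate}[i)]
\item Sufficiency holds because the horizontal part of $\nabla^{p,q}_{X^V}Y^V$ is linear in $K$.
\item Necessity follows by evaluating at $u=0$. This point lies in every fiber and in $BM_q$ for every $q$.
\end{enumerate}

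Your necessity argument is also shorter than the paper's route. The paper makes special choices of $X,Y$ relative to a general $u$, then needs a separate symmetry argument to pass from $K_Yu=0$ for $Y\perp u$ to $K=0$. Yours is uniform in $p$ and $q$. When you write it up, state the corrected equivalence explicitly, and note that i) is sufficient only when $K=0$.
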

 
 \begin{proof}
 	Using Theorem~\ref{L2}, the fibers of $TM$ are totally geodesic if and only if
 	\begin{equation}\label{eq:tot_geod}
 	-\frac{p}{\alpha^{p+1}} \big\{ g(X,Y) + q\, g(X,u) g(Y,u) \big\} K_u u
 	+ \frac{q}{\alpha^p} \big\{ g(X,u) K_Y u + g(Y,u) K_X u \big\} = 0,
 	\end{equation}
 	for all $u, X, Y \in T_x M$ and all $x \in M$.
 	 	Choose $0 \neq X = Y \perp u$. Substituting into \eqref{eq:tot_geod} yields
 	\begin{equation}\label{Mo7}
 	\frac{p}{\alpha^{p+1}} K_u u = 0, \qquad \forall\, u \in TM.
 	\end{equation}
 	Then \eqref{eq:tot_geod} simplifies to
 	\[
 	\frac{q}{\alpha^p} \big\{ g(X,u) K_Y u + g(Y,u) K_X u \big\} = 0,
 	\qquad \forall\, x \in M,\ u,X,Y \in T_xM.
 	\]
 	 	Now take $0 \neq X = u \perp Y$. This gives
 	\begin{equation}\label{Mo8}
 	\frac{q}{\alpha^p} K_Y u = 0,
 	\qquad \forall\, x \in M,\ u \perp Y.
 	\end{equation}
 	 	If $p \neq 0$, then \eqref{Mo7} implies $K_u u = 0$, and \eqref{Mo8} becomes $q K_Y u = 0$. Thus either $q=0$ or $K=0$.
 	 	If $p = 0$, then from \eqref{Mo8} we obtain either $q=0$ or
 	\begin{equation}\label{Mo9}
 	K_Y u = 0,
 	\qquad \forall\, x \in M,\ u \perp Y.
 	\end{equation}
 	 	We claim that \eqref{Mo9} is equivalent to $K=0$. Indeed, by the symmetry of $K$,
 	\[
 	g(K_u X, Y) = g(K_u Y, X) = 0, \qquad \forall\, X \in T_xM,
 	\]
 	so $K_u X$ is always proportional to $u$. Hence there exists a scalar function $\alpha(X)$ such that
 	\[
 	K_u X = \alpha(X)\, u.
 	\]
 	Since $u$ and $X$ are arbitrary, this yields $K_X u = \beta(u)\, X$ for some scalar $\beta(u)$.  
 	Symmetry then implies $K_u X = K_X u$. Taking $X \perp u$ gives $\alpha(X) = \beta(u) = 0$. Hence,
 	\[
 	K = 0.
 	\]
 	 	This completes the proof.
 \end{proof}
 	 \begin{proposition}
	 Consider a statistical manifold \((M,g,\nabla)\) and equip its tangent bundle \(TM\) with a Cheeger--Gromoll type metric \(g_{p,q}\).  
	 Then the geodesic flow determined by the connection \(\nabla\) has zero divergence (i.e., is incompressible) if and only if one of the following holds:
	 	\begin{enumerate}[i)]
	 		\item the parameters of the metric satisfy \(p=q=0\), so that \(g_{p,q}\) coincides with the Sasaki metric;
	 		\item the difference tensor of the statistical structure vanishes, \(K=0\), equivalently \(\nabla=\widehat{\nabla}\) is the Levi--Civita connection of \((M,g)\).
	 	\end{enumerate}
	 \end{proposition}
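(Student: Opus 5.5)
Let $\xi$ be the geodesic spray of $\nabla$. At $(x,u)$ it is the horizontal lift of $u$ itself: $\xi_{(x,u)}=u^H$, locally $\xi=u^i\delta_i$. The plan is to compute $\operatorname{div}\xi$ with respect to $\nabla^{p,q}$ in a $g_{p,q}$-adapted frame, express it through $K$, and then show it vanishes identically only in cases i) or ii), following the pattern of the previous proposition on totally geodesic fibers.

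\textbf{Step 1: choose a frame.} At a point $(x,u)$ with $u\neq0$, take a $g$-orthonormal basis $\{e_1,\dots,e_n\}$ of $T_xM$ with $e_1=u/\sqrt{\tau}$. Then
\[
\{e_i^H\}\cup\Big\{\alpha^{p/2}(1+q\tau)^{-1/2}e_1^V,\ \alpha^{p/2}e_a^V\ (a\ge2)\Big\}
\]
is $g_{p,q}$-orthonormal, by \eqref{L20}. The divergence splits as
\[
\operatorname{div}\xi=\sum_i g_{p,q}(\nabla^{p,q}_{e_i^H}\xi,e_i^H)+\sum_{\text{vert}}g_{p,q}(\nabla^{p,q}_{E}\xi,E).
\]

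\textbf{Step 2: the horizontal part.} Write $\nabla^{p,q}_{X^H}u^H=\nabla^{p,q}_{X^H}(u^i\delta_i)$. Use Theorem~\ref{L2}(i) together with $X^H(u^i)\partial_i=-\nabla_Xu$, as in \eqref{Mo6}. The $\nabla_X$ terms cancel, leaving $-(K_Xu)^H$ plus a vertical curvature term. The horizontal contribution is therefore $-\sum_i g(K_{e_i}u,e_i)=-g(\operatorname{tr}K,u)$, where $\operatorname{tr}K=\sum_iK_{e_i}e_i$.

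\textbf{Step 3: the vertical part.} For a vertical frame vector $E=cY^V$, the derivative is $\nabla^{p,q}_{Y^V}u^H=u^i\nabla^{p,q}_{Y^V}\delta_i$, since $Y^V(u^i)$ produces the horizontal vector $Y^H$, which has no vertical component. Apply Theorem~\ref{L2}(iii) and keep only the vertical part:
\[
\frac{p}{\alpha}\tau'\,Y^V-(K_Yu)^V-qg(Y,u)\Big\{(K_uu)^V-\tfrac{q}{1+q\tau}\tau'u^V\Big\},\qquad \tau':=g(K_uu,u).
\]
Pairing with the frame and using the symmetry $g(K_YZ,W)=g(Z,K_YW)$ gives a total of the form
\[
\operatorname{div}\xi=-2g(\operatorname{tr}K,u)+A(p,q,\tau)\,\frac{g(K_uu,u)}{\tau}\,\tau
\]
with an explicit rational coefficient $A$. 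Taking the trace carefully over the $e_1$ direction, with its rescaled norm, I expect something like $A=\frac{np}{\alpha}-\frac{q\tau}{1+q\tau}\cdot(\ldots)$. I expect this bookkeeping to be the main obstacle: getting the exact coefficients in front of $g(K_uu,u)$, especially the interaction of the $(1+q\tau)$ normalization with the $q$-terms.

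\textbf{Step 4: the equivalence.} Sufficiency is immediate if $K=0$. For $p=q=0$, the Sasaki case, $A$ must be checked to reduce to the standard Liouville-type result; in my formula it probably reduces to a trace term. I expect the paper's convention makes this vanish, for instance because the horizontal and vertical trace terms cancel. If they do not, I would recheck the sign in Step 3.

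For necessity, the key fact is that $\operatorname{div}\xi$ vanishes for all $u$. The two contributions are homogeneous of different degrees in $u$: $g(\operatorname{tr}K,u)$ is linear, while $g(K_uu,u)$ carries an extra cubic dependence with nonpolynomial coefficients in $\tau$. Replacing $u$ by $tu$ and comparing the behaviour in $t$ separates the conditions. This yields $p\,g(K_uu,u)=0$ and $q\,g(K_uu,u)=0$, plus a trace condition.

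If $(p,q)\neq(0,0)$, then $g(K_uu,u)=0$ for all $u$. Polarizing, since $g(K_XY,Z)$ is totally symmetric, this gives $K=0$. This mirrors the final argument of the totally geodesic fibers proposition.
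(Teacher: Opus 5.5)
Your approach (take the trace of $\bar X\mapsto\nabla^{p,q}_{\bar X}\xi$ block by block, then rescale $u\mapsto tu$) is the paper's, and your Steps 2--3 are correct. The normalization worry in Step 3 disappears because the trace is basis-independent. The vertical block is the endomorphism
\[
Y\mapsto \tfrac{p}{\alpha}g(K_uu,u)Y-K_uY-q\,g(Y,u)w
\]
of $T_xM$, with $w=K_uu-\frac{q}{1+q\tau}g(K_uu,u)u$. Its rank-one piece contributes $-q\,g(w,u)=-\frac{q}{1+q\tau}g(K_uu,u)$. Hence
\[
\operatorname{div}\xi=-2g(\operatorname{tr}K,u)+\Big(\frac{np}{\alpha}-\frac{q}{1+q\tau}\Big)g(K_uu,u),\qquad g(\operatorname{tr}K,u)=\operatorname{trace}(K_u).
\]
You can confirm this in coordinates from three facts:
\[
\sqrt{\det g_{p,q}}=\det(g)\,\alpha^{-np/2}(1+q\tau)^{1/2},\qquad \xi(\tau)=-2g(K_uu,u),\qquad \xi(\log\det g)+\partial_{\bar k}(-\Gamma^k_{ij}u^iu^j)=-2u^iK^k_{ki}.
\]
The paper's proof writes $-q$ where $-\frac{q}{1+q\tau}$ belongs; it drops the $u$-component of $w$ when taking the trace.

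The real gap is Step 4, and it cannot be closed, because the statement is false as written.

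\textbf{Sufficiency of (i) fails.} At $p=q=0$ the horizontal and vertical blocks each contribute $-\operatorname{trace}(K_u)$. They add rather than cancel, and there is no sign error in Step 3 to find. For instance, in Example~\ref{M111} ($n=1$, $K^1_{11}\neq0$) the Sasaki metric gives $\operatorname{div}\xi=-2K^1_{11}u^1\neq0$. Your own necessity analysis already signals this. After $u\mapsto tu$, dividing by $t$ and letting $t\to0$ forces $\operatorname{tr}K=0$, which $p=q=0$ does not imply. The paper only argues the ``only if'' direction and never checks (i).

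\textbf{Necessity also fails.} Your claimed split into $p\,g(K_uu,u)=0$ and $q\,g(K_uu,u)=0$ does not follow. What actually remains is
\[
\Big(\frac{np}{1+s}-\frac{q}{1+qs}\Big)g(K_uu,u)=0
\]
for all admissible $s=t^2\tau$. That bracket vanishes identically not only for $p=q=0$ but also for $np=q=1$; in that case the volume form of $g_{p,q}$ equals the Sasaki one. So for $n\ge2$, take $(p,q)=(1/n,1)$ with a nonzero trace-free $K$. A concrete example is the constant $K$ on flat $\mathbb{R}^2$ given by $K_{e_1}e_1=e_1$, $K_{e_1}e_2=K_{e_2}e_1=-e_2$, $K_{e_2}e_2=-e_1$. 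This gives an incompressible flow with neither (i) nor (ii).

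\textbf{What the argument actually proves:} $\operatorname{div}\xi\equiv0$ if and only if $K=0$, or $\operatorname{tr}K=0$ and $(p,q)\in\{(0,0),(1/n,1)\}$.
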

 \begin{proof}
 	Recall that the geodesic flow vector field \(\xi\) on \((M,\nabla)\) is locally given by
 	\[
 	\xi=\sum_{i=1}^n u^i\left( \frac{\partial}{\partial x^i} \right)^H .
 	\]
 	A direct computation yields
 	\begin{align*}
 	(\operatorname{div}\xi)_{(x,u)} 
 	&= \operatorname{trace}\!\left\{ X \mapsto -2 K_u X 
 	+ \frac{p}{\alpha} g(K_u u, u)\, X 
 	- q\, g(X,u)\!\left[ K_u u - \frac{q}{1+q\tau} g(K_u u, u)\, u \right] \right\} \\
 	&= -2 \operatorname{trace}(K_u) 
 	+ \frac{n p - \alpha q}{\alpha}\, g(K_u u, u).
 	\end{align*}
 	Thus, \(\xi\) is incompressible if and only if
 	\begin{equation}\label{eq:incompressible}
 	\operatorname{trace}(K_u)
 	= \frac{n p - \alpha q}{2\alpha}\, g(K_u u, u),
 	\qquad \forall u\in TM.
 	\end{equation}
 	 	Evaluating \eqref{eq:incompressible} at \(\lambda u\) gives
 	\[
 	\operatorname{trace}(K_{\lambda u})
 	= \frac{n p - (1+\lambda^2\tau)\, q}{2 (1+\lambda^2\tau)}\, 
 	\lambda^3\, g(K_u u, u),
 	\]
 	or equivalently,
 	\begin{equation}\label{eq:trace_lambda}
 	\operatorname{trace}(K_u)
 	= \lambda^2\, 
 	\frac{n p - (1+\lambda^2\tau)\, q}{2 (1+\lambda^2\tau)}
 	g(K_u u, u),
 	\qquad u\in TM,\; \lambda\in\mathbb{R}.
 	\end{equation}
 	 	Comparing \eqref{eq:incompressible} and \eqref{eq:trace_lambda}, we obtain
 	\begin{equation}\label{Mo10}
 	f(\lambda,\lvert u\rvert^2)\, g(K_u u, u)=0,
 	\qquad \forall u\in TM,\; \lambda\in\mathbb{R},
 	\end{equation}
 	where \(f:\mathbb{R}^+\times\mathbb{R}^+\to\mathbb{R}\) is defined by
 	\begin{equation}\label{Mo11}
 	f(\lambda,\mu)
 	=\frac{n p - (1+\mu)q}{2(1+\mu)}
 	-\frac{\lambda\bigl(n p - (1+\lambda\mu)q\bigr)}{2(1+\lambda\mu)}.
 	\end{equation}
 	 	Assume that there exists \(u\in TM\) such that \(g(K_u u, u)\neq 0\) (hence \(u\neq 0\)).  
 	Then for any \(v\neq 0\), we also have \(g(K_{v u}(v u), v u)\neq 0\).  
 	From \eqref{Mo10} it follows that
 	\[
 	f(\lambda, v^2 \lvert u\rvert^2)=0,
 	\qquad \forall v\in\mathbb{R}^+.
 	\]
 	Therefore \(f(\lambda,\mu)=0\) for all \((\lambda,\mu)\in\mathbb{R}^+\times\mathbb{R}^+\).  
 	Using \eqref{Mo11} this yields
 	\[
 	q\,\lambda\mu^2 + q(1+\lambda)\mu + q - n p = 0,
 	\qquad \forall (\lambda,\mu)\in\mathbb{R}^+\times\mathbb{R}^+,
 	\]
 	which forces \(q=p=0\).
 	 	If instead \(g(K_u u, u)=0\) for all \(u\in TM\), then by symmetry we obtain \(K=0\). 
 \end{proof}
		 	 	\begin{proposition}\label{MA11}
	 	Let \((M,g,\nabla)\) be a statistical manifold. The components of the curvature tensor \(R^{p,q}\) of the Levi-Civita connection \(\nabla^{p,q}\) corresponding to the Cheeger-Gromoll type metric \(g_{p,q}\) at \((x,u) \in BM_q\) are given by
	 			 			\begin{align*}
	 			\big\{R^{p,q}(X^H,Y^H)Z^H\big\}^H&=R(X,Y)Z+(\nabla_YK)(X,Z)-(\nabla_XK)(Y,Z)+K_XK_YZ-K_YK_XZ\nonumber\\
	 			&+\frac{1}{4\alpha^p}\{\widetilde R(u,R(X,Z)u)Y+qg(R(X,Z)u,u)\widetilde R(u,u)Y-\widetilde R(u,R(Y,Z)u)X\nonumber\\
	 			&-qg(R(Y,Z)u,u)\widetilde R(u,u)X+2\widetilde R(u,R(X,Y)u)Z+2qg(R(X,Y)u,u)\widetilde R(u,u)Z\},\nonumber
	 			\end{align*}
	 			\begin{align*}
	 	\big\{R^{p,q}(X^H,Y^H)Z^H\big\}^V&=\frac{1}{2}((\nabla_YR)(X,Z,u)-(\nabla_XR)(Y,Z,u))	+\frac{1}{2}\{K_XR(Y,Z)u-R(Y,K_XZ)u\nonumber\\
 		&-K_YR(X,Z)u+R(X,K_YZ)u\}+\frac{p}{2\alpha}\{g(K_Yu,u)R(X,Z)u-g(K_Xu,u)R(Y,Z)u\nonumber\\
 		&+2g(K_Zu,u)R(X,Y)u\}+\frac{q}{2}g(R(Y,Z)u,u)\{K_Xu-\frac{q}{1+q\tau}g(K_Xu,u)u\}\nonumber\\
 		&-\frac{q}{2}g(R(X,Z)u,u)\{K_Yu-\frac{q}{1+q\tau}g(K_Yu,u)u\}-qg(R(X,Y)u,u)\{K_Zu\nonumber\\
 		&-\frac{q}{1+q\tau}g(K_Zu,u)u\}-K_{R(X,Y)u}Z\nonumber,
	 			\end{align*}
\begin{align*}
	\big\{R^{p,q}(X^H,Y^H)Z^V\big\}^H&=
-\frac{p}{\alpha}(K_uu)g_{p,q}(Z^V,(R(X,Y)u)^V)+\frac{1}{\alpha^p}\{(K_{R(X,Y)u}Z)+q(K_Zu)g(R(X,Y)u,u)	\\
	&+q(K_{R(X,Y)u}u)g(Z,u)\}+\frac{1}{2\alpha^p}\{(\nabla_X\widetilde R)(u,Z,Y)-(\nabla_Y\widetilde R)(u,Z,X)\}\\
	&+\frac{q}{\alpha^p}\{g(K_YZ,u)\widetilde R(u,u)X-g(K_XZ,u)\widetilde R(u,u)Y\}+\frac{q}{2\alpha^p}g(Z,u)\{(\nabla_X\widetilde R)(u,u,Y)\\
	&-(\nabla_Y\widetilde R)(u,u,X)\}+\frac{pq}{2\alpha^{p+1}}g(Z,u)\{g(K_Xu,u)\widetilde R(u,u)Y-g(K_Yu,u)\widetilde R(u,u)X\}\\
	&+\frac{p}{2\alpha^{p+1}}\{g(K_Xu,u)\widetilde R(u,Z)Y-g(K_Yu,u)\widetilde R(u,Z)X\}+\frac{1}{2\alpha^{p}}\{(K_Y\widetilde R(u,Z)X)\\
	&-(K_X\widetilde R(u,Z)Y)\}+\frac{q}{2\alpha^{p}}g(Z,u)\{(K_Y\widetilde R(u,u)X)-(K_X\widetilde R(u,u)Y)\}\\
	&+\frac{1}{2\alpha^{p}}\{\widetilde R(u,K_XZ)Y+qg(K_XZ,u)\widetilde R(u,u)Y-\widetilde R(u,K_YZ)X-qg(K_YZ,u)\widetilde R(u,u)X\}\\
	&+\frac{q}{2\alpha^p}g(Z,u)\{\widetilde R(u,K_Xu)Y-\widetilde R(u,K_Yu)X\},
	\end{align*}
	\begin{align*}
	\big\{R^{p,q}(X^H,Y^H)Z^V\big\}^V&=R(X,Y)Z-\frac{p}{\alpha}\{g(R(X,Y)u,u)Z+g(Z,u)(R(X,Y)u)\}\\
	&+\mathcal{M}g(R(X,Y)u,Z)u+\mathcal{N}g(R(X,Y)u,u)g(Z,u)u\\
	&+\frac{1}{4\alpha^p}\{R(Y,\widetilde R(u,Z)X)u+qg(Z,u)R(Y,\widetilde R(u,u)X)u-R(X,\widetilde R(u,Z)Y)u\\
	&-qg(Z,u)R(X,\widetilde R(u,u)Y)u\}+qg(Z,u)\{(\nabla_YK)(X,u)-(\nabla_XK)(Y,u)\}\\
	&+(\nabla_YK)(X,Z)-(\nabla_XK)(Y,Z)+K_XK_YZ-K_YK_XZ+qg(K_Xu,Z)\{K_Yu\\
	&-\frac{q}{1+q\tau}g(K_Yu,u)u\}-qg(K_Yu,Z)\{K_Xu-\frac{q}{1+q\tau}g(K_Xu,u)u\}\\
	&+(\frac{q^2}{1+q\tau}g(Z,u)u+\frac{p}{\alpha}Z)\{2g(K_YK_Xu,u)-g((\nabla_YK)(X,u),u)-2g(K_XK_Yu,u)\\
	&+g((\nabla_XK)(Y,u),u)\}+qg(Z,u)(K_XK_Yu-K_YK_Xu),
\end{align*}
\begin{align*}
\big\{R^{p,q}(X^H,Y^V)Z^H\big\}^H&=\frac{1}{2\alpha^p}(\nabla_X\widetilde R)(u,Y,Z)+\frac{q}{2\alpha^p}g(Y,u)(\nabla_X\widetilde R)(u,u,Z)-\frac{q}{\alpha^p}g(K_XY,u)\widetilde R(u,u)Z\\
&+\frac{p}{\alpha^{p+1}}g(K_Xu,u)\{\widetilde R(u,Y)Z+qg(Y,u)\widetilde R(u,u)Z\}-\frac{1}{2\alpha^p}(K_X\widetilde R(u,Y)Z)\\
&+\frac{pq}{2\alpha^{p+1}}g(K_Zu,u)g(Y,u)\widetilde R(u,u)X-\frac{q}{2\alpha^p}g(Y,u)(K_X\widetilde R(u,u)Z)\\
&+\frac{p}{2\alpha^{p+1}}g(K_Zu,u)\widetilde R(u,Y)X-\frac{1}{2\alpha^p}\{\widetilde R(u,K_YZ)X+qg(K_YZ,u)\widetilde R(u,u)X \}\\
&-\frac{q}{2\alpha^p}g(Y,u)\widetilde R(u,K_Zu)X+\frac{1}{2\alpha^ p}\{\widetilde R(u,Y)K_XZ+qg(Y,u)\widetilde R(u,u)K_XZ \}\\
&-\frac{p}{2\alpha}(K_uu)g_{p,q}(Y^V,(R(X,Z)u)^V)+\frac{1}{2\alpha^p}(K_Y R(X,Z)u)+\frac{q}{2\alpha^p}g(Y,u)(K_uR(X,Z)u)\\
&+\frac{q}{2\alpha^p}(K_Yu)g(R(X,Z)u,u),
\end{align*}
\begin{align*}
\big\{R^{p,q}(X^H,Y^V)Z^H\big\}^V&=2qg(K_XY,u)\{K_Zu-\frac{q}{1+q\tau}g(K_Zu,u)u\}-(\nabla_XK)(Y,Z)\\
&+(\frac{q^2}{1+q\tau}g(Y,u)u+\frac{p}{\alpha}Y)\{g((\nabla_XK)(Z,u),u)-2g(K_XK_Zu,u)\}\\
&-qg(Y,u)(\nabla_XK)(Z,u)+(\frac{2pq^2}{\alpha(1+q\tau)}+\frac{q^3}{(1+q\tau)^2})g(Y,u)g(K_Zu,u)g(K_Xu,u)u\\
&+\frac{p(2+p)}{\alpha^2}g(K_Xu,u)g(K_Zu,u)Y+qg(K_YZ,u)\{K_Xu-\frac{q}{1+q\tau}g(K_Xu,u)u\}\\
&-\frac{qp}{\alpha}g(Y,u)g(K_Zu,u)K_Xu-\frac{p}{\alpha}g(K_Xu,u)K_YZ-qg(Y,u)\{\frac{p}{\alpha}g(K_Xu,u)K_Zu\\
&-K_XK_Zu\}-\frac{p}{\alpha}g(K_Zu,u)K_XY+\frac{p}{\alpha}g(K_uK_XZ,u)Y+K_XK_YZ-K_YK_XZ\\
&-qg(Y,u)\{K_uK_XZ-\frac{q}{1+q\tau}g(K_uK_XZ,u)u\}-\frac{1}{4\alpha^p}R(X,\widetilde R(u,Y)Z)u\\
&-\frac{q}{4\alpha^p}g(Y,u)R(X,\widetilde R(u,u)Z)u-\frac{p}{2\alpha}\{g(Y,u)R(X,Z)u+g(R(X,Z)u,u)Y\}\\
&+\frac{1}{2\alpha(1+q\tau)}\{(q\alpha+p)g(Y,R(X,Z)u)+pqg(Y,u)g(R(X,Z)u,u)\}u+\frac{1}{2}R(X,Z)Y,
\end{align*}
\begin{align*}
\big\{R^{p,q}(X^H,Y^V)Z^V\big\}^H=&-\frac{p(p+2)}{\alpha^2}g(K_Xu,u)(K_uu)g_{p,q}(Y^V,Z^V)+\frac{p}{2\alpha^{p+1}}g(Y,u)\widetilde R(u,Z)X+\frac{1}{2\alpha^{p+1}}(pg(Y,Z)\\
&+pqg(Y,u)g(Z,u))
\widetilde R(u,u)X-\frac{1}{2\alpha^p}\widetilde R(Y,Z)X-\frac{p+q\alpha}{2\alpha^{p+1}}g(Z,u)\widetilde R(u,Y)X\\
&-\frac{q}{2\alpha^{p}}g(Z,u)\widetilde R(Y,u)X-\frac{1}{(2\alpha^p)^2}\{\widetilde R(u,Y)\widetilde R(u,Z)X+qg(Y,u)\widetilde R(u,u)\widetilde R(u,Z)X\}\\
&
-\frac{q}{\alpha^p}g(K_XZ,u)K_Yu-\frac{q}{(2\alpha^p)^2}g(Z,u)\{\widetilde R(u,Y)\widetilde R(u,u)X+qg(Y,u)\widetilde R(u,u)\widetilde R(u,u)X\}\\
&
+\frac{1}{\alpha^p}(\nabla_XK)(Y,Z)-\frac{p}{\alpha}g_{p,q}(Y^V,Z^V)(\nabla_XK)(u,u)+\frac{q}{\alpha^p}\{g(Y,u)(\nabla_XK)(Z,u)\\
&+g(Z,u)(\nabla_XK)(Y,u)\}-\frac{2q}{\alpha^p}g(K_XY,u)K_Zu+\frac{pq}{\alpha^{p+1}}g(Y,u)g(K_Xu,u)K_Zu\\
&+\frac{1}{\alpha^p}(K_YK_XZ-K_XK_YZ)+\frac{q}{\alpha^p}g(Z,u)(K_YK_Xu-K_XK_Yu)\\
&-\frac{q^2(q\alpha-p(1+q\tau))}{(1+q\tau)\alpha^{p+1}}g(Y,u)g(Z,u)g(K_Xu,u)K_uu-\frac{p}{\alpha}K_uu\{-2g_{p,q}((K_XY)^V,Z^V)\\
&-\frac{2q}{\alpha^p}g(Y,u)g(K_XZ,u)+g_{p,q}(Y^V,(K_XZ)^V)+qg(Z,u)g_{p,q}(Y^V,(K_Xu)^V)\}\\
&+\frac{q}{\alpha^p}g(Y,u)\{K_uK_XZ+qg(Z,u)K_uK_Xu\}+\frac{p}{\alpha^{p+1}}g(K_Xu,u)K_YZ\\
&-\frac{q}{\alpha^p}g(Y,u)K_XK_Zu+\frac{pq}{\alpha^{p+1}}g(K_Xu,u)g(Z,u)K_Yu
+\frac{p}{\alpha}g_{p,q}(Y^V,Z^V)K_XK_uu,
\end{align*}
\begin{align*}
\big\{R^{p,q}(X^H,Y^V)Z^V\big\}^V=&
-\frac{1}{2\alpha^p}R(X,K_YZ)u+\frac{p}{2\alpha}g_{p.q}(Y^V,Z^V)R(X,K_uu)u-\frac{q}{2\alpha^p}\{g(Y,u)R(X,K_Zu)u\\
&+g(Z,u)R(X,K_Yu)u\}-\frac{1}{2\alpha^p}\{\frac{p}{\alpha}g(K_u\widetilde R(u,Z)X,u)Y-(K_Y\widetilde R(u,Z)X)\\
&-qg(Y,u)\{(K_u\widetilde R(u,Z)X)-\frac{q}{1+q\tau}g(K_u\widetilde R(u,Z)X,u)u\}\}\\
&-\frac{q}{2\alpha^p}g(Z,u)\{\frac{p}{\alpha}g(K_u\widetilde R(u,u)X,u)Y-(K_Y\widetilde R(u,u)X)-qg(Y,u)\{(K_u\widetilde R(u,u)X)\\
&-\frac{q}{1+q\tau}g(K_u\widetilde R(u,u)X,u)u\}\}+\frac{p}{\alpha}g(K_XZ,u)Y-\mathcal{M}g(K_XY,Z)u\\
&-\mathcal{N}g(K_XZ,u)g(Y,u)u+(\mathcal{M}q-2\mathcal{N}-\frac{2q^2}{1+q\tau})g(K_XY,u)g(Z,u)u\\
&+(\frac{p}{\alpha}+q)g(Z,u)(K_XY)+(-2\mathcal{M}'+(\mathcal{M}\tau-1)\frac{q^2}{1+q\tau})g(K_Xu,u)g(Y,Z)u\\
&-\frac{p}{\alpha}g(Y,Z)(K_Xu)+(\frac{2q^3}{(1+q\tau)^2}-2\mathcal{N}'-\frac{q^2\mathcal{M}}{1+q\tau})g(Y,u)g(Z,u)g(K_Xu,u)u\\
&-\frac{2p(1+q\tau)+pq\alpha+q^2\alpha^2}{\alpha^2(1+q\tau)}g(K_Xu,u)g(Z,u)Y,
\end{align*}
\begin{align*}
\big\{	R^{p,q}(X^V,Y^V)Z^H\big\}^H&=\frac{1}{\alpha^p}\widetilde R(X,Y)Z-\frac{2p+q\alpha}{2\alpha^{p+1}}\{g(X,u)\widetilde R(u,Y)Z-g(Y,u)\widetilde R(u,X)Z\}\\
	&-\frac{q}{2\alpha^{p}}\{g(X,u)\widetilde R(Y,u)Z-g(Y,u)\widetilde R(X,u)Z\}+\frac{1}{(2\alpha^p)^2}\{\widetilde R(u,X)\widetilde R(u,Y)Z\\
	&+qg(X,u)\widetilde R(u,u)\widetilde R(u,Y)Z-\widetilde R(u,Y)\widetilde R(u,X)Z-qg(Y,u)\widetilde R(u,u)\widetilde R(u,X)Z\}\\
	&+\frac{q}{(2\alpha^p)^2}\{g(Y,u)\widetilde R(u,X)\widetilde R(u,u)Z-g(X,u)\widetilde R(u,Y)\widetilde R(u,u)Z\}+\frac{1}{\alpha^p}\{K_YK_XZ\\
	&-K_XK_YZ\}-\frac{q}{\alpha^p}\{g(X,u)K_{u}K_YZ-g(Y,u)K_{u}K_XZ+K_Xug(K_YZ,u)\\
	&-K_Yug(K_XZ,u)+g(Y,u)K_{X}K_Zu-g(X,u)K_{Y}K_Zu\}+\frac{p}{\alpha}K_uu\{g_{p,q}(X^V,(K_YZ)^V)\\
	&+qg(Y,u)g_{p,q}(X^V,(K_Zu)^V)-g_{p,q}(Y^V,(K_XZ)^V)-qg(X,u)g_{p,q}(Y^V,(K_Zu)^V)\},
	\end{align*}
	\begin{align*}
	\big\{	R^{p,q}(X^V,Y^V)Z^H\big\}^V&=\frac{p}{2\alpha^{p+1}}\{g(K_u\widetilde R(u,Y)Z,u)X-g(K_u\widetilde R(u,X)Z,u)Y\}+\frac{1}{2\alpha^p}\{(K_Y\widetilde R(u,X)Z)\\
	&-(K_X\widetilde R(u,Y)Z)\}+\frac{q}{2\alpha^p}g(X,u)\{\frac{q}{1+q\tau}g(K_u\widetilde R(u,Y)Z,u)u-(K_u\widetilde R(u,Y)Z)\}\\
	&+\frac{q}{2\alpha^p}g(Y,u)\{(K_u\widetilde R(u,X)Z)-\frac{q}{1+q\tau}g(K_u\widetilde R(u,X)Z,u)u\}\\
	&+\frac{pq}{2\alpha^{p+1}}g(K_u\widetilde R(u,u)Z,u)\{g(Y,u)X-g(X,u)Y\}+\frac{q}{2\alpha^{p}}\{g(X,u)(K_Y\widetilde R(u,u)Z)\\
	&-g(Y,u)(K_X\widetilde R(u,u)Z)\}+(\frac{p+q\alpha}{\alpha})\{g(X,u)K_YZ-g(Y,u)K_XZ\}\\
	&-\frac{p}{\alpha}\{g(K_YZ,u)X-g(K_XZ,u)Y\}+\mathcal{M}\{g(Y,K_XZ)-g(X,K_YZ)\}u\\
	&+(\frac{pq+q^2\alpha}{\alpha(1+q\tau)}+\frac{2p}{\alpha^2})g(K_Zu,u)\{g(Y,u)X-g(X,u)Y\}\\
	&+\frac{q^2}{1+q\tau}\{g(Y,u)g(K_ZX,u)-g(X,u)g(K_ZY,u)\}u,
	\end{align*}
	\begin{align*}
	\big\{R^{p,q}(X^V,Y^V)Z^V\big\}^H&=\frac{1}{2\alpha^{2p}}
	\{\widetilde R(u,X)K_YZ+qg(X,u)\widetilde R(u,u)K_YZ-\widetilde R(u,Y)K_XZ-qg(Y,u)\widetilde R(u,u)K_XZ\}\\
	&+\frac{p}{2\alpha^{p+1}}\{\{(\widetilde R(u,Y)K_uu)+qg(Y,u)(\widetilde R(u,u)K_uu)\}g_{p,q}(X^V,Z^V)-\{(\widetilde R(u,X)K_uu)\\
	&+qg(X,u)(\widetilde R(u,u)K_uu)\}g_{p,q}(Y^V,Z^V)\}+\frac{q}{2\alpha^{2p}}g(Z,u)\{\widetilde R(u,X)K_Yu-\widetilde R(u,Y)K_Xu\}\\
	&+\frac{q}{2\alpha^{2p}}\{g(Y,u)\widetilde R(u,X)K_Zu-g(X,u)\widetilde R(u,Y)K_Zu\}+\frac{q^2}{2\alpha^{2p}}g(Z,u)\{g(X,u)\widetilde R(u,u)K_Yu\\
	&-g(Y,u)\widetilde R(u,u)K_Xu\}+(\frac{2p}{\alpha^{p+2}}+\frac{\mathcal{M}q}{\alpha^{p}})K_uu\big(g(X,u)g(Y,Z)-g(Y,u)g(X,Z)\big)\\
	&+\frac{p+q\alpha}{\alpha^{p+1}}\{g(Y,u)K_XZ-g(X,u)K_YZ\}-\frac{p}{\alpha^{p+1}}\{g(Y,Z)K_Xu-g(X,Z)K_Yu\},
	\end{align*}
	\begin{align*}
	\big\{R^{p,q}(X^V,Y^V)Z^V\big\}^V&=	-\frac{p^2}{\alpha^2}g(K_uK_uu,u)\{g_{p,q}(Y^V,Z^V)X-g_{p,q}(X^V,Z^V)Y\}\\
	&+\frac{p}{\alpha}g_{p,q}(Y^V,Z^V)\{K_XK_uu+qg(X,u)\{K_uK_uu\\
	&-\frac{q}{1+q\tau}g(K_uK_uu,u)u\}\}-\frac{p}{\alpha}g_{p,q}(X^V,Z^V)\{K_YK_uu+qg(Y,u)\{K_uK_uu\\
	&-\frac{q}{1+q\tau}g(K_uK_uu,u)u\}\}+\frac{p}{\alpha^{p+1}}\{g(K_uK_YZ,u)X-g(K_uK_XZ,u)Y\}\\
	&+\frac{1}{\alpha^p}(K_YK_XZ-K_XK_YZ)-\frac{q}{\alpha^p}g(X,u)\{(K_uK_YZ)-\frac{q}{1+q\tau}g(K_uK_YZ,u)u\}\\
	&+\frac{q}{\alpha^p}g(Y,u)\{(K_uK_XZ)-\frac{q}{1+q\tau}g(K_uK_XZ,u)u\}+\frac{pq}{\alpha^{p+1}}g(K_uK_Zu,u)\{g(Y,u)X\\
	&-g(X,u)Y\}+\frac{q}{\alpha^p}\{g(X,u)K_YK_Zu-g(Y,u)K_XK_Zu\}\\
	&+\frac{q}{\alpha^p}g(Z,u)\{\frac{p}{\alpha}g(K_uK_Yu,u)X-\frac{p}{\alpha}g(K_uK_Xu,u)Y+(K_YK_Xu-K_XK_Yu)\\
	&-qg(X,u)\{(K_uK_Yu)-\frac{q}{1+q\tau}g(K_uK_Yu,u)u\}+qg(Y,u)\{(K_uK_Xu)\\
	&-\frac{q}{1+q\tau}g(K_uK_Xu,u)u\}\}\!+\!\frac{(\tau \mathcal{N}\alpha+2-p)p-\mathcal{N}\alpha^2}{\alpha^2}g(Z,u)\!(g(X,u)Y\\
	&-g(Y,u)X)+(2\mathcal{M}'+\mathcal{M}(\mathcal{M}+\mathcal{N}\tau)-\mathcal{N})\{g(Y,Z)g(X,u)-g(X,Z)g(Y,u)\}u\\
	&+\frac{p(\mathcal{M}\tau-1)-\mathcal{M}\alpha}{\alpha}(g(X,Z)Y-g(Y,Z)X).
	\end{align*}
\end{proposition}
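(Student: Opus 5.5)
We compute directly from the definition $R^{p,q}(\bar X,\bar Y)\bar Z=\nabla^{p,q}_{\bar X}\nabla^{p,q}_{\bar Y}\bar Z-\nabla^{p,q}_{\bar Y}\nabla^{p,q}_{\bar X}\bar Z-\nabla^{p,q}_{[\bar X,\bar Y]}\bar Z$, taking $\bar X,\bar Y,\bar Z$ among the lifts $X^H,X^V$. By tensoriality it suffices to check the identities pointwise. We may therefore work at a fixed point $x$ with vector fields $X,Y,Z$ satisfying $(\widehat\nabla X)_x=(\widehat\nabla Y)_x=(\widehat\nabla Z)_x=0$. Where convenient we may instead normalize $\nabla X=\nabla Y=\nabla Z=0$ at $x$; this choice explains the $\nabla K$ and $\nabla\widetilde R$ terms in the statement. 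The brackets are supplied by \eqref{L5}, so that for instance
\[
[X^H,Y^H]=[X,Y]^H-(R(X,Y)u)^V,\qquad [X^H,Y^V]=(\nabla_XY)^V,\qquad [X^V,Y^V]=0.
\]
The first-order covariant derivatives come from Theorem~\ref{L2}. The second-order terms are handled in two ways. When a lifted field has a coefficient depending on $u$, such as $g(Y,u)$, $\tau$, $\alpha^{-p}$ or $(1+q\tau)^{-1}$, we differentiate that coefficient using \eqref{Nour1}. Fields that depend on $u$ itself, such as $(\widetilde R(u,Y)X)^H$, $(K_Xu)^V$ and $u^V$, are differentiated using the Lemma giving $\nabla^{p,q}_{X^H}u^V$ and $\nabla^{p,q}_{X^V}u^V$, together with the analogous formulas for horizontal lifts of $u$-dependent fields. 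Concretely, $X^V$ acting on a field linear in $u$ replaces $u$ by $X$, while $X^H$ acting on it produces $\nabla_X$ of the tensor, plus a $-K_Xu$ correction coming from \eqref{Nour1}.

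We treat the six essentially different cases in the order $(H,H,H)$, $(H,H,V)$, $(H,V,H)$, $(H,V,V)$, $(V,V,H)$, $(V,V,V)$. The remaining ones follow from $R^{p,q}(\bar X,\bar Y)=-R^{p,q}(\bar Y,\bar X)$. In each case we split the result into its horizontal and vertical parts, which are $g_{p,q}$-orthogonal.

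For $(H,H,H)$ we proceed in four steps.
\begin{itemize}
\item Apply (i) twice. The horizontal part of $\nabla_X\nabla_YZ-\nabla_Y\nabla_XZ$ with $\nabla-K$ then gives $R(X,Y)Z+(\nabla_YK)(X,Z)-(\nabla_XK)(Y,Z)+[K_X,K_Y]Z$, using $R$ of $\nabla$ and the expansion of the $\nabla-K$ curvature.
\item The term $-\tfrac12\nabla^{p,q}_{X^H}(R(Y,Z)u)^V$ is evaluated with (ii). This produces the $\frac{1}{4\alpha^p}\widetilde R$ terms in the horizontal part and the $K$-corrections in the vertical part.
\item The bracket term $-\nabla^{p,q}_{-(R(X,Y)u)^V}Z^H$ is evaluated with (iii), which accounts for the factor $2$ on the $R(X,Y)u$ terms.
\item The vertical derivative of $R(Y,Z)u$ along $X^H$ yields $(\nabla_XR)(Y,Z,u)$ together with $-R(Y,Z)K_Xu$-type terms.
\end{itemize}

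The mixed cases are handled in the same way, using (ii) and (iii) twice and (iv) once. For the $(V,V,\cdot)$ cases the bracket term vanishes, so only (iii)/(iv) composed with (iv) and the Lemma are needed. Throughout, quantities of the form $g_{p,q}(W^V,Z^V)$ are converted back with the first formula of the Proposition on $g_{p,q}$. Vertical components along $u$ are solved for exactly as for $W$ in the proof of Theorem~\ref{L2}, via $g(W,u)=\frac{\cdots}{1+q\tau}$; this is the source of the recurring $\frac{q}{1+q\tau}$ projections. The primed constants $\mathcal{M}',\mathcal{N}'$ denote derivatives of $\mathcal{M},\mathcal{N}$ with respect to $\tau$. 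They arise because $X^V\tau=2g(X,u)$, so that
\[
X^V\mathcal{M}=2\mathcal{M}'g(X,u).
\]

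The main obstacle is the bookkeeping in the $(H,V,V)$ and $(V,V,V)$ vertical parts. There the coefficients $\alpha^{-p}$, $\frac{p}{\alpha}$, $\mathcal{M}$, $\mathcal{N}$ and $\frac{q}{1+q\tau}$ are each differentiated, and the many $g(\cdot,u)u$ terms must be collected into single coefficients. To control this, I would first compute everything with $K=0$ and check against the known Cheeger--Gromoll type formulas of \cite{BL}. I would then isolate the terms linear and quadratic in $K$ as the new contributions, and check the special cases $p=q=0$ (the Sasaki metric) and $p=q=1$, as in Example~\ref{M111}, as consistency tests.
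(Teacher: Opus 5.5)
Your plan is the paper's own proof: expand $R^{p,q}$ case by case from the definition using the brackets \eqref{L5}, the connection formulas of Theorem~\ref{L2}, the Lemma for $\nabla^{p,q}_{X^H}u^V$ and $\nabla^{p,q}_{X^V}u^V$, and \eqref{Nour1} for the $u$-dependent coefficients, with $\mathcal{M}',\mathcal{N}'$ the $\tau$-derivatives entering through $X^V\tau=2g(X,u)$ (the paper writes this out only for $\{R^{p,q}(X^V,Y^V)Z^V\}^V$ and says the other cases are similar). One bookkeeping caution: the horizontal distribution is built from $\nabla$, so differentiating a $u$-linear expression along $X^H$ gives just $\nabla_X$ of the underlying tensor evaluated at $u$, and the $-K_Xu$ correction in \eqref{Nour1} arises only if you differentiate with $\widehat\nabla$ instead; do not add both, and in particular no $R(Y,Z)K_Xu$ term accompanies $(\nabla_XR)(Y,Z)u$ in the $(H,H,H)$ vertical part.
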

\begin{proof}
	We focus on proving the case  $\big(	R^{p,q}(X^V,Y^V)Z^V\big)^V$, as other cases follow similarly. 
		The vertical component of the curvature tensor is given by:
	\begin{align}\label{L2222}
	\big(	R^{p,q}(X^V,Y^V)Z^V\big)^V=&\big(	{{\nabla}}^{p,q}_{X^V}	{{\nabla}}^{p,q}_{Y^V}Z^V-	{{\nabla}}^{p,q}_{Y^V}	{{\nabla}}^{p,q}_{X^V}Z^V\big)^V.
	\end{align}
By part (iv) of Theorem \ref{L2}, the first term on the right-hand side of \eqref{L2222} can be written as
	\begin{align*}
	\big(	{{\nabla}}^{p,q}_{X^V}	{{\nabla}}^{p,q}_{Z^V}Z^V\big)^V	=&\big(	{{\nabla}}^{p,q}_{Y^V}[-\frac{p}{\alpha}
	(K_uu)^Hg_{p,q}(Z^V,Y^V)+\frac{1}{\alpha^p}(K_YZ)^H+\frac{q}{\alpha^p}(K_Zu)^Hg(Y,u)+\frac{q}{\alpha^p}(K_Yu)^Hg(Z,u)\\
	&
 -\frac{p}{\alpha}(g(Y,u)Z^V+g(Z,u)Y^V)+\mathcal{M}g(Y,Z)u^V  +\mathcal{N}g(Y,u)g(Z,u)u^V]\big)^V.
	\end{align*}
	For each term on the right-hand side, applying part (iv) of Theorem \ref{L2} gives:
	\begin{align*}
	\big(	{{\nabla}}^{p,q}_{X^V}[-\frac{p}{\alpha}
	(K_uu)^Hg_{p,q}(Z^V,Y^V)]\big)^V=&-\frac{p}{\alpha}
	g_{p,q}(Z^V,Y^V)[\frac{p}{\alpha}g(K(	{K_uu},u),u)X^V-(K(X,{K_uu}))^V\\
	& -qg(X,u)\{(K({K_uu},u))^V-\frac{q}{1+q\tau}g(K({K_uu},u),u)u^V\}],
	\end{align*}
	\begin{align*}
	\big(	{{\nabla}}^{p,q}_{X^V}[\frac{1}{\alpha^p}(K_YZ)^H]\big)^V=&\frac{p}{\alpha^{p+1}}g(K(K_YZ,u),u)X^V-\frac{1}{\alpha^p}(K(X,K_YZ))^V\\
	&  -\frac{q}{\alpha^p}g(X,u)\{(K(K_YZ,u))^V-\frac{q}{1+q\tau}g(K(K_YZ,u),u)u^V\},\hspace{2cm}
		\end{align*}
	\begin{align*}
	\big(	{{\nabla}}^{p,q}_{X^V}[\frac{q}{\alpha^p}(K_Zu)^Hg(Y,u)]\big)^V=&\frac{q}{\alpha^p}g(Y,u)[\frac{p}{\alpha}g(K(K_Zu,u),u)X^V-(K(X,K_Zu))^V\\
	& -qg(X,u)\{(K(K_Zu,u))^V-\frac{q}{1+q\tau}g(K(K_Zu,u),u)u^V\}],\hspace{1.2cm}
	\end{align*}
	\begin{align*}
	\big(	{{\nabla}}^{p,q}_{X^V}[\frac{q}{\alpha^p}(K_Yu)^Hg(Z,u)]\big)^V=&\frac{q}{\alpha^p}g(Z,u)[\frac{p}{\alpha}g(K(K_Yu,u),u)X^V-(K(X,K_Yu))^V\\
	& -qg(X,u)\{(K(K_Yu,u))^V-\frac{q}{1+q\tau}g(K(K_Yu,u),u)u^V\}],\hspace{1cm}
	\end{align*}
	\begin{align*}
	\big(	{{\nabla}}^{p,q}_{X^V}[-\frac{p}{\alpha}(g(Y,u)Z^V+g(Z,u)Y^V)]\big)^V=&\frac{2p}{\alpha^2}g(X,u)(g(Y,u)Z^V+g(Z,u)Y^V)-\frac{p}{\alpha}g(Y,u)[-\frac{p}{\alpha}(g(X,u)Z^V\\
	&+g(Z,u)X^V)+\mathcal{M}g(X,Z)u^V  +\mathcal{N}g(X,u)g(Z,u)u^V]\\
	&-\frac{p}{\alpha}g(Z,u)[-\frac{p}{\alpha}(g(X,u)Y^V+g(Y,u)X^V)+\mathcal{M}g(X,Y)u^V \\
	& +\mathcal{N}g(X,u)g(Y,u)u^V]-\frac{p}{\alpha}(g(Y,X)Z^V+g(Z,X)Y^V),
	\end{align*}
	\begin{align*}		\big(	{{\nabla}}^{p,q}_{X^V}[\mathcal{M}g(Y,Z)u^V]\big)^V=&2\mathcal{M'}g(X,u)g(Y,Z)u^V+\mathcal{M}g(Y,Z)[-\frac{p}{\alpha}(g(X,u)u^V+\tau X^V)\\
	&+\mathcal{M}g(X,u)u^V  +\tau\mathcal{N}g(X,u)u^V]+\mathcal{M}g(Y,Z)X^V,\hspace{4cm}
	\end{align*}
	\begin{align*}
	\big(	{{\nabla}}^{p,q}_{X^V}[\mathcal{N}g(Y,u)g(Z,u)u^V]\big)^V=&2\mathcal{N'}g(X,u)g(Y,u)g(Z,u)u^V+\mathcal{N}g(Y,u)g(Z,u)X^V\\
	&+\mathcal{N}g(Y,u)g(Z,u)[-\frac{p}{\alpha}(g(X,u)u^V+\tau X^V)+\mathcal{M}g(X,u)u^V  \\
	&+\tau\mathcal{N}g(X,u)u^V]+\mathcal{N}g(Y,X)g(Z,u)u^V+\mathcal{N}g(Y,u)g(Z,X)u^V.\hspace{1cm}
	\end{align*}
	Summing these seven terms, performing the analogous calculation with $X$ and $Y$ switched to compute $\big( \nabla^{p,q}_{Y^V} \nabla^{p,q}_{X^V} Z^V \big)^V$, and subtracting the two results, we obtain the explicit expression for the vertical component $\big(R^{p,q}(X^V,Y^V)Z^V\big)^V$ as claimed.
 	\end{proof}
 \section{Sectional and  scalar curvatures of {$(TM, g_{p,q})$}}
 Let $(M, g, \nabla)$ be a statistical manifold, and let $|\cdot|$ denote the norm induced by the Riemannian metric $g$. Given any pair of vector fields $X, Y \in \mathfrak{X}(M)$, the squared area of the parallelogram spanned by $X$ and $Y$ is defined by
 \[
 Q(X, Y) := |X|^2 |Y|^2 - g(X, Y)^2.
 \]
 This quantity vanishes if and only if $X$ and $Y$ are linearly dependent. If $X$ and $Y$ are linearly independent, the {sectional curvature} of the $2$-plane they span is given by
 \[
 \mathcal{K}(X, Y) := \frac{g(R(X, Y)Y, X)}{Q(X, Y)},
 \]
 where $R$ is the curvature tensor associated with the connection $\nabla$.

  \begin{theorem}\label{N2}
	Let $(M,g, \nabla)$ be a statistical manifold and $TM$ be its tangent bundle endowed with the Cheeger-Gromoll type metric $g_{p,q}$. Then $(TM,g_{p,q})$ is of constant sectional curvature if and only if it is flat and the following assertions hold:
	\begin{itemize}
		\item [(i)] $(M,g)$ is flat;
		\item [(ii)] $K$ is parallel with respect to $\nabla$;
		\item [(iii)] $[K_X,K_Y]=0$, for all $X,Y \in \chi(M)$;
		\item [(iv)] $p=q=0$, i.e. $g_{p,q}$ is the Sasaki metric.
	\end{itemize}
\end{theorem}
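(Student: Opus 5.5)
We will prove both implications using the curvature components of Proposition~\ref{MA11}. The sufficiency direction is the easier one. Suppose $p=q=0$, so $\alpha^p=1$, $\mathcal{M}=\mathcal{N}=0$ and $g_{0,0}$ is the Sasaki metric. Suppose also that $(M,g)$ is flat, $\nabla K=0$ and $[K_X,K_Y]=0$.

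First we show that $R=0$. The decomposition of $R$ in Section~2 in terms of $\widehat R$, $\nabla K$ and $[K_X,K_Y]$ gives $R=0$ directly, hence also $\widetilde R=0$ and $\nabla R=0$. We then substitute into each of the twelve components listed in Proposition~\ref{MA11}. Every surviving term is one of the following:
\begin{itemize}
\item a multiple of $p$ or $q$;
\item a term involving $R$, $\widetilde R$ or their covariant derivatives;
\item a term involving $\nabla K$;
\item a commutator $K_XK_YZ-K_YK_XZ$ or $K_YK_XZ-K_XK_YZ$.
\end{itemize}
All of these vanish, so $R^{0,0}=0$. A flat metric has constant sectional curvature $0$, which gives sufficiency. (The same bookkeeping also records the identity $K_XK_Y=K_YK_X$ used below.)

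For necessity, assume $(TM,g_{p,q})$ has constant sectional curvature $c$, i.e.
\[
R^{p,q}(\bar X,\bar Y)\bar Z=c\{g_{p,q}(\bar Y,\bar Z)\bar X-g_{p,q}(\bar X,\bar Z)\bar Y\}.
\]
We will compare this with the components of Proposition~\ref{MA11} on the zero section and along rays $tu$. The steps are as follows.

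\emph{Step 1: $c=0$.} Take $u=0$ and an orthonormal pair $X,Y$. Then $\alpha=1$ and $\tau=0$, so all terms containing $u$ drop out. The formula for $\{R^{p,q}(X^H,Y^V)Z^V\}^H$ reduces at $u=0$ to
\[
\frac{1}{\alpha^p}(\nabla_XK)(Y,Z)+K_YK_XZ-K_XK_YZ-\tfrac12\widetilde R(Y,Z)X .
\]
This is horizontal, while the constant-curvature formula forces this component to vanish. The vertical component $\{R^{p,q}(X^V,Y^V)Z^V\}^V$ at $u=0$ is
\[
K_YK_XZ-K_XK_YZ+\frac{\mathcal M\alpha}{\alpha}\bigl(g(Y,Z)X-g(X,Z)Y\bigr),
\]
with $\mathcal M=q+p$ at $u=0$ (using the last coefficient with $\tau=0$). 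It must equal $c\{g(Y,Z)X-g(X,Z)Y\}$. This relates $c$, $p+q$ and the commutator of $K$.

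\emph{Step 2: separate homogeneity degrees in $u$.} Next we exploit the dependence on $u$. Replace $u$ by $tu$. Since $g(\bar Y,\bar Z)$ for vertical vectors carries the factor $\alpha^{-p}$, we multiply both sides by the appropriate power of $\alpha$. We then compare Taylor coefficients in $t$ at $t=0$, treating $t^0$, $t^1$ and $t^2$ separately. Applied to $\{R^{p,q}(X^V,Y^V)Z^V\}^V$, the $t^2$ coefficient contains terms like $(2\mathcal M'+\dots)$, $\frac{(\dots)p}{\alpha^2}$ and $\frac{p(\mathcal M\tau-1)-\mathcal M\alpha}{\alpha}$. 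Comparing these with $c$ times the expansion of $\alpha^{-p}(1+\dots)$ should force $p=q=0$ and $c=0$. The commutator term is of order zero, so Step~1 then forces $[K_X,K_Y]=0$.

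\emph{Step 3: the remaining conditions.} With $p=q=0$ and $c=0$, we use the other components:
\begin{itemize}
\item $\{R^{0,0}(X^H,Y^V)Z^V\}^H=0$ at $u=0$ gives $(\nabla_XK)(Y,Z)=\tfrac12\widetilde R(Y,Z)X$.
\item $\{R^{0,0}(X^H,Y^H)Z^V\}^V=0$ at $u=0$ gives $R(X,Y)Z+(\nabla_YK)(X,Z)-(\nabla_XK)(Y,Z)=0$.
\end{itemize}
Symmetrizing the first identity in $(Y,Z)$ gives $\widetilde R(Y,Z)X=-\widetilde R(Z,Y)X$. Combined with the remark after \eqref{M3}, namely $\widetilde R(Y,Z)=-\widetilde R^*(Z,Y)$, and the symmetry of $\nabla K$ from \eqref{def1}, this should yield $\nabla K=0$ and then $R=0$. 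Finally, the decomposition formula in Section~2 turns $R=0$, $\nabla K=0$ and $[K_X,K_Y]=0$ into $\widehat R=0$, which is condition (i).

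The main obstacle is Step~2: extracting $p=q=0$ from the messy scalar coefficients $\mathcal M,\mathcal M',\mathcal N,\mathcal N'$. I would first try the special configuration $X=u/|u|$, $Y\perp u$, $Z=Y$. In that case most $K$-terms combine into commutators, and the comparison reduces to a one-variable rational identity in $\tau$ whose coefficients must vanish identically.
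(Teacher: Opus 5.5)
Your sufficiency argument is fine. It is the same check the paper leaves as ``easy''. The necessity direction, however, has a genuine gap, and it starts in Step~1.

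Under constant curvature $c$ you have $R^{p,q}(X^H,Y^V)Z^V=c\,g_{p,q}(Y^V,Z^V)X^H$, which is horizontal. So at $u=0$ the horizontal part you computed must equal $c\,g(Y,Z)X$, not $0$. Also, at $\tau=0$ one has $p(\mathcal{M}\tau-1)-\mathcal{M}\alpha=-(2p+q)$, so the scalar term of $\{R^{p,q}(X^V,Y^V)Z^V\}^V$ at $u=0$ is $(2p+q)\bigl(g(Y,Z)X-g(X,Z)Y\bigr)$, not $(p+q)(\cdots)$. Step~1 therefore does not yield $c=0$.

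The observation you are missing is the following. At the zero section, $\{R^{p,q}(X^H,Y^H)Z^H\}^H$ and $\{R^{p,q}(X^H,Y^H)Z^V\}^V$ reduce to the same tensor $R(X,Y)Z+(\nabla_YK)(X,Z)-(\nabla_XK)(Y,Z)+[K_X,K_Y]Z$, which is $\widehat R(X,Y)Z$. (Conceptually, the zero section is the fixed set of the isometry $(x,u)\mapsto(x,-u)$, hence totally geodesic, and these are its Gauss and Ricci equations.) The first must equal $c\bigl(g(Y,Z)X-g(X,Z)Y\bigr)$ and the second must vanish, so $c=0$ and $\widehat R=0$.

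With $c=0$, the zero-section values of $\{R^{p,q}(X^H,Y^V)Z^H\}^V$, $\{R^{p,q}(X^H,Y^V)Z^V\}^H$, $\{R^{p,q}(X^V,Y^V)Z^H\}^H$ and $\{R^{p,q}(X^V,Y^V)Z^V\}^V$ give $R(X,Z)Y=\widetilde R(Y,Z)X$ and $\widetilde R(X,Y)Z=[K_X,K_Y]Z=(2p+q)\bigl(g(Y,Z)X-g(X,Z)Y\bigr)$. Hence $2p+q=0$, $R=0$, $[K_X,K_Y]=0$ and $\nabla K=0$. So (i)--(iii) all come from the zero section before anything is known about $p$ and $q$ individually. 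Your ordering (first $p=q=0$, then the rest) is the reverse of what is actually available. Separately, in Step~3 the symmetry of $(\nabla_XK)(Y,Z)$ in $Y,Z$ makes $\widetilde R(Y,Z)X$ symmetric in $Y,Z$, not antisymmetric.

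Step~2, which you rightly flag as the main obstacle, is only a hope (``should force''), and as formulated it does not go through. The $t^2$ coefficient of the $\{R^{p,q}(X^V,Y^V)Z^V\}^V$ identity contains terms quadratic in $K$ that are neither commutators nor known to vanish. In your configuration $X=u/|u|$, $Y=Z\perp u$, the $q$-terms do combine into $q\,|u|\,[K_Y,K_u]Y$. The $p$-terms $\frac{p}{\alpha}g_{p,q}(Y^V,Z^V)K_XK_uu$ and $\frac{p}{\alpha^{p+1}}g(K_uK_YZ,u)X$ do not. Since $\nabla K=0$ and $[K_X,K_Y]=0$ still allow $K\neq0$ (Example~\ref{Ex0}), you cannot read off $p,q$ from scalar coefficients until $K$ has been dealt with.

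That is the idea the paper supplies off the zero section. Take the vertical part of $R^{p,q}(X^H,Y^V)Z^V$ with $Z=u$, $Y\perp u$ (and $R=0$). Every term except $\lambda(\tau)\,g(K_Xu,u)Y$ is symmetric in $X,Y$, for an explicit scalar $\lambda(\tau)$. The paper then splits into two cases:
\begin{itemize}
\item If $\lambda$ vanishes for all $\tau>0$, this together with $2p+q=0$ is a polynomial identity in $\tau$ forcing $q=0$.
\item Otherwise the paper deduces $K=0$, and then reads $q\tau^2[(2+q)\tau-q+4]=0$ off $R^{p,q}(X^V,Y^V)Z^V$ with $Y=Z=u$, $X\perp u$.
\end{itemize}
Either way $q=0$, hence $p=0$.
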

\begin{proof}
	It is well known that $(TM,g_{p,q})$ is of constant sectional curvature $\kappa$ if and only if
	$$R^{p,q}(X,y)Z=\kappa(g(Y,Z)X -g(X,Z)Y), \qquad \forall X,Y,Z \in \chi(TM).$$
	Suppose that $(TM,g_{p,q})$ is of constant sectional curvature $\kappa$. Then the six expressions of Propostion \ref{MA11}, restricted to the zero section of $TM$, are equivalent to the following identities
	\begin{eqnarray}
	& & R(X,Y)Z +(\nabla_Y K)(X,Z) -(\nabla_X K)(Y,Z) +[K_X,K_Y]Z =\kappa(g(Y,Z)X -g(X,Z)Y), \label{flat1}\\
	& & R(X,Y)Z +(\nabla_Y K)(X,Z) -(\nabla_X K)(Y,Z) +[K_X,K_Y]Z =0,\label{flat2}\\
	& & \frac12 R(X,Z)Y-(\nabla_X K)(Y,Z) +[K_X,K_Y]Z=-\kappa g(X,Z)Y,\label{flat3}\\
	& &- \frac12 \tilde{R}(Y,Z)X +(\nabla_X K)(Y,Z) -[K_X,K_Y]Z= \kappa g(Y,Z)X, \label{flat4}\\
	& & \tilde{R}(X,Y)Z - [K_X,K_Y]Z=0,\label{flat5}\\
	& & [K_X,K_Y]Z -(2p+q)(g(Y,Z)X -g(X,Z)Y) =-\kappa (g(Y,Z)X -g(X,Z)Y),\label{flat6}
	\end{eqnarray}
	for all $X,Y,Z \in \chi(M)$.
		Comparing \eqref{flat1} and \eqref{flat2}, it is easy to deduce that $\kappa =0$, i.e. $(TM,g_{p,q})$ is flat. Now, substituting from \eqref{flat3} into \eqref{flat4}, with $\kappa=0$, we obtain
	$$-\tilde{R}(Y,Z)X +R(X,Z)Y=0.$$
	Then, for all $W \in \chi(M)$, we have
	\begin{equation}\label{flat7}
	0=g(\tilde{R}(Y,Z)X,W) +g(R(X,Z)Y,W)= g(R(X,W)Y,Z) +g(R(X,Z)Y,W).
	\end{equation}
	On the other hand, combining \eqref{flat5} and \eqref{flat6}, we get
	\begin{equation}\label{flat8}
	\tilde{R}(X,Y)Z = (2p+q)(g(Y,Z)X -g(X,Z)Y).
	\end{equation}
	Substituting from \eqref{flat8} into \eqref{flat7}, we obtain
	$$(2p+q)(g(Y,Z)g(X,W) -g(X,Z)g(Y,W))=0,$$
	for all $X,Y,Z,W \in \chi(M)$, thus
	\begin{equation}\label{flat9}
	2p+q=0.
	\end{equation}
	Hence from \eqref{flat8}, it follows that
	\begin{equation}\label{flat10}
	R=0.
	\end{equation}
	Substituting from \eqref{flat10} into \eqref{flat5} then into \eqref{flat3}, we get
	\begin{equation}\label{flat11}
	[K_X,K_Y] =0, \ \ \  \nabla K=0.
	\end{equation}
	Now, taking in the expression of $R^{p,q}(X^H,Y^V)Z^V$ of Proposition \ref{MA11} with $Z=u \neq 0$ and $Y \perp u$, $Y \neq 0$, then the vanishing of the vertical part of right hand side gives
	\begin{equation}\label{flat12}
	\begin{split}
	&\left(\frac{p}{\alpha} -\tau \frac{2p(1+q\tau) +pq\alpha +q^2\alpha^2}{\alpha^2(1+q\tau)}\right)g(K_X,u,u) Y  \\
	= & -\left(\left((\mathcal{M}q-2\mathcal{N} -\frac{2q^2}{1+q\tau}\right)\tau-\mathcal{M}\right) g(K_XY,u) u -\left(\frac{p}{\alpha} +q\right) \tau K_XY.
	\end{split}
	\end{equation}
	The symmetry in $X$ and $Y$ of the right hand side of \eqref{flat12} implies that of the left hand side, i.e.
	\begin{equation}\label{flat13}
	\left(\frac{p}{\alpha} -\tau \frac{2p(1+q\tau) +pq\alpha +q^2\alpha^2}{\alpha^2(1+q\tau)}\right)(g(K_X,u,u) Y -g(K_Y u,u)X)=0.
	\end{equation}
	We have then two cases:
	\\
	\textbf{Case 1:} there is $\tau >0$ such that $\frac{p}{\alpha} -\tau \frac{2p(1+q\tau) +pq\alpha +q^2\alpha^2}{\alpha^2(1+q\tau)} \neq 0$. In this case, $g(K_X,u,u) Y =g(K_Y u,u)X$, for any $X$, $u$ such that $\|u\|^2 =\tau$ and $Y \perp u$. By linearity, $g(K_X,u,u) Y =g(K_Y u,u)X$ holds for any $X$, $u$ and $Y \perp u$. In particular, for $X =u$, we get $g(K_uu,u) =g(K_Y u,u)=0$, for any $u$ and $Y \perp u$. But by the symmetry properties of $K$, we have $g(K_Yu,u)=g(K_uY,u)=g(K_uu,Y)$. We deduce that $g(K_uu,u) =g(K_uu,Y)=0$, for any $u$ and $Y \perp u$. We deduce that $K_uu=0$ for all $u$. By the symmetry of $K$, we have $K=0$. Now, taking in the expression of $R^{p,q}(X^V,Y^V)Z^V$ of Proposition \ref{MA11} $Y=Z=u \neq 0$ and $X \perp u$, $X \neq 0$, then we obtain
	\begin{equation*}
	- \frac{(\tau \mathcal{N} \alpha +2 -p)p -\mathcal{N}\alpha^2}{\alpha^2}\tau^2 -\frac{p(\mathcal{M}\tau -1) -\mathcal{M}\alpha}{\alpha}\tau=0.
	\end{equation*}
	A simple calculation, using $2p+q=0$, yields
	\begin{equation*}
	q\tau^2[(2+q) \tau -q+4]=0, \qquad \textup{for all} \qquad \tau \geq 0,
	\end{equation*}
	which gives $q=0$ and hance $p=0$.
	\\
	\textbf{Case 2:} Suppose that $\frac{p}{\alpha} -\tau \frac{2p(1+q\tau) +pq\alpha +q^2\alpha^2}{\alpha^2(1+q\tau)} =0$, for all $\tau >0$. Since $\alpha=1+\tau$ and $2p+q=0$, then we deduce that $2q^2 \tau^3 +2q^2 \tau^2 +q(2q-1)\tau +q=0$, for all $\tau >0$. It follows that $q=0$, and hence $p=0$.
		Conversely if conditions $(i)-(iv)$ hold then it is easy to check that $R^{p,q}$ vanishes identically.
\end{proof}
\begin{example}\label{Ex0}
	Consider the Euclidean space \(\mathbb{R}^n\) endowed with its standard Riemannian metric 
	\[
	g = dx^{1} \otimes dx^{1} + \cdots + dx^{n} \otimes dx^{n},
	\]
	where \((x^{1}, \ldots, x^{n})\) is the canonical coordinate system. It is well-known that \((\mathbb{R}^n, g)\) is flat.
		Let \(\nabla\) be a statistical connection on \(\mathbb{R}^n\), i.e., 
	\[
	\nabla = \hat{\nabla} + T,
	\]
	where \(\hat{\nabla}\) is the standard flat connection on \(\mathbb{R}^n\), and 
	\[
	T = \sum_{i,j,k=1}^n T^i_{jk} \frac{\partial}{\partial x^i} \otimes dx^j \otimes dx^k,
	\]
	is a \((1,2)\)-tensor field satisfying the symmetry conditions
	\[
	T^i_{jk} = T^i_{kj} = T^j_{ik}, \quad \text{for all } i,j,k = 1, \ldots, n.
	\]
		We equip the tangent bundle \(T\mathbb{R}^n\) with the Sasaki metric \(g_{0,0}\). According to Theorem \ref{N2}, the tangent bundle \((T\mathbb{R}^n, g_{0,0})\) is flat if and only if the tensor \(T\) is parallel with respect to \(\nabla\) and satisfies the commutativity condition \([T_{\partial_i}, T_{\partial_j}] = 0,  i,j=1,\ldots,n\).
		It can be shown that the commutativity condition is equivalent to
	\[
	T^l_{j\lambda} T^\lambda_{ki} = T^l_{k\lambda} T^\lambda_{ji}, \quad \text{for all } i,j,k,l = 1, \ldots, n,
	\]
	and the parallelism of \(T\) translates to the system of partial differential equations
	\[
	\frac{\partial T^i_{jk}}{\partial x^l} = T^i_{jk} T^\lambda_{\lambda l}, \quad \text{for all } i,j,k,l = 1, \ldots, n.
	\]
		To simplify the analysis, we focus on the case \(n=2\). By the symmetry conditions, the tensor \(T\) is completely determined by four functions:
	\[
	A := T^1_{11}, \quad B := T^2_{22}, \quad C := T^2_{11} = T^1_{12} = T^1_{21}, \quad D := T^1_{22} = T^2_{12} = T^2_{21}.
	\]
		Then the commutativity condition reduces to the algebraic relation
	\[
	D (A - D) = C (C - B),
	\]
	while the parallelism condition becomes the system
	\[
	\begin{cases}
	\frac{\partial A}{\partial x} = A^2 + C^2, & \frac{\partial A}{\partial y} = C (A + D), \\
	\frac{\partial B}{\partial x} = D (B + C), & \frac{\partial B}{\partial y} = B^2 + D^2, \\
	\frac{\partial C}{\partial x} = C (A + D), & \frac{\partial C}{\partial y} = C^2 + D^2, \\
	\frac{\partial D}{\partial x} = C^2 + D^2, & \frac{\partial D}{\partial y} = D (B + C).
	\end{cases}
	\]
		For instance, by taking \(C = D = 0\), the algebraic relation is trivially satisfied, and the system reduces to
	\[
	\frac{\partial A}{\partial x} = A^2, \quad \frac{\partial A}{\partial y} = 0, \quad \frac{\partial B}{\partial x} = 0, \quad \frac{\partial B}{\partial y} = B^2,
	\]
	whose general solutions are
	\[
	A(x,y) = \frac{1}{c_1 -x}, \quad B(x,y) = \frac{1}{c_2 - y},
	\]
	for some constants \(c_1, c_2 \in \mathbb{R}\), defined on the open domain 
	\[
	M := \mathbb{R}^2 \setminus \left( \{x = c_1, y = c_2\} \right).
	\]
		Therefore, the statistical manifold \((M, g, \nabla)\) is flat, and its tangent bundle \((TM, g_{0,0})\) with the Sasaki metric is flat as well.
\end{example}
\begin{example}\label{Ex1}
	Let \( M = \mathbb{R}^2 \) be equipped with the pseudo-Riemannian metric 
	\[
	g = dx^1 \otimes dx^2 + dx^2 \otimes dx^1,
	\]
	where \( (x^1, x^2) \) are standard coordinates system. Consider the \((1,2)\)-tensor field
	\[
	K = \widetilde{f}(x^1,x^2) \frac{\partial}{\partial x^1} \otimes dx^2 \otimes dx^2,
	\]
	where \( \widetilde{f} \) is a smooth function on \( \mathbb{R}^2 \). Define a connection \( \nabla \) on \( \mathbb{R}^2 \) by
	\[
	\nabla_{\partial_i} \partial_j := \widehat{\nabla}_{\partial_i} \partial_j + K_{\partial_i} \partial_j,
	\]
	where \( \widehat{\nabla} \) is the Levi-Civita connection of \( g \). It follows that the only non-zero Christoffel symbol is \( \Gamma^1_{22} = \widetilde{f}(x^1,x^2) \) and all other symbols vanish.
		Thus, the components of the cubic form \( \mathcal{C} \) are zero, except 
	\[
	 \mathcal{C}_{222} = -2\widetilde{f}(x^1,x^2),
	\]
	which implies that \( (\mathbb{R}^2, g, \nabla) \) is a statistical manifold.
		From (\ref{2N}), we deduce that the curvature tensor \( R = 0 \), i.e., \( \mathbb{R}^2 \) is flat. Moreover, it can be checked that $	(\nabla_{\partial_i} K)(\partial_j, \partial_k) =0, i,j,k=1,2$, unless
	\[
	(\nabla_{\partial_1} K)(\partial_2, \partial_2) =\partial_1 \widetilde{f}(x^1,x^2)\partial_1,\ \ \ (\nabla_{\partial_2} K)(\partial_2, \partial_2) =\partial_2 \widetilde{f}(x^1,x^2)\partial_1.
	\]
	In addition, one can see that
	\[
K_{\partial_i} K_{\partial_j} \partial_k = K_{\partial_j} K_{\partial_i} \partial_k \quad \forall i,j,k=1,2.
	\]
	So, \( K \) is parallel with respect to \( \nabla \) if and only if \( \widetilde{f} \) is constant. In this case, by Theorem~\ref{N2}, the tangent bundle \( (TM, g_{0,0}) \)  is flat.
\end{example}
\begin{example}
		Let $M$ be the manifold of normal distributions defined by
	\[
	M = \left\{ p(x;\mu,\sigma) = \frac{1}{\sqrt{2\pi}\sigma} \exp\left(-\frac{(x - \mu)^2}{2\sigma^2}\right) \ \middle| \ \mu \in \mathbb{R},\ \sigma > 0 \right\}.
	\]
	This defines a $2$-dimensional statistical manifold with natural coordinates $(\mu, \sigma)$. The Fisher information metric on $M$ is given by
	\begin{align*}
	g = \begin{pmatrix}
	\frac{1}{\sigma^2} & 0 \\
	0 & \frac{2}{\sigma^2}
	\end{pmatrix}.
	\end{align*}
	The non-zero components of the Levi-Civita connection $\widehat{\nabla}$ are
	\begin{align*}
	\widehat{\Gamma}^{1}_{12} = \widehat{\Gamma}^{1}_{21} = -\frac{1}{\sigma}, \quad
	\widehat{\Gamma}^{2}_{11} = \frac{1}{2\sigma}, \quad
	\widehat{\Gamma}^{2}_{22} = -\frac{1}{\sigma},
	\end{align*}
	while the corresponding components of an affine connection $\nabla$ are 
	\begin{align*}
	\Gamma^{1}_{12} = \Gamma^{1}_{21} = -\frac{2}{\sigma}, \quad
	\Gamma^{2}_{22} = -\frac{3}{\sigma},
	\end{align*}
	with all other independent components vanishing.
	Using \eqref{L33}, the non-zero components of the skewness tensor $K$ are as follows
	\begin{align*}
	K^{1}_{12} = K^{1}_{21} = -\frac{1}{\sigma}, \quad
	K^{2}_{11} = -\frac{1}{2\sigma}, \quad
	K^{2}_{22} = -\frac{2}{\sigma}.
	\end{align*}
	Substituting the above equations into \eqref{M4}, the non-zero components of the cubic tensor $\mathcal{C}$ become
	\[
	\mathcal{C}_{111} = 0, \quad
	\mathcal{C}_{112} = \mathcal{C}_{121} = \mathcal{C}_{211} = \frac{2}{\sigma^3}, \quad
	\mathcal{C}_{122} = \mathcal{C}_{212} = \mathcal{C}_{221} = 0, \quad
	\mathcal{C}_{222} = \frac{8}{\sigma^3}.
	\]
	Thus, the triple $(M, g, \nabla)$ is a flat statistical manifold.
The components of the covariant derivative of the tensor \( K \) satisfy
	\[
	\nabla_{\partial_1}K^1_{11} = -\nabla_{\partial_1}K^2_{12} = -\nabla_{\partial_1}K^2_{21} = -\frac{1}{2} \nabla_{\partial_2}K^1_{12} = -\frac{1}{2} \nabla_{\partial_2}K^1_{21} = -\frac{1}{4} \nabla_{\partial_2}K^2_{22} = \frac{1}{\sigma^2}.
	\]
	Hence, the condition~(ii) of Theorem~\ref{N2} doesn't hold, and we conclude that for all values of \( p \) and \( q \), the tangent bundle \( (TM, g_{p,q}) \) is not flat.
	
	
\end{example}
 In the context of the tangent bundle $TM$ equipped with the generalized Cheeger-Gromoll type metric $g_{p,q}$, we denote the corresponding area function and sectional curvature by $Q_{p,q}$ and $\mathcal{K}_{p,q}$, respectively.

Assume $X, Y \in T_x M$ are two orthonormal vectors with respect to $g$, i.e., $g(X, X) = g(Y, Y) = 1$ and $g(X, Y) = 0$. Then, using the definition of $g_{p,q}$ in \eqref{L20}, we obtain:
\begin{align*}
Q_{p,q}(X^H, Y^H) &= 1, \\[5pt]
Q_{p,q}(X^H, Y^V) &= \frac{1}{\alpha^p} \left(1 + q  g(Y, u)^2 \right), \\[5pt]
Q_{p,q}(X^V, Y^V) &= \frac{1}{\alpha^{2p}} \left(1 + q  g(X, u)^2 + q  g(Y, u)^2 \right).
\end{align*}
 	Now, let $(M, g, \nabla)$ be a statistical manifold with sectional curvature $\mathcal{K}$. Using the generalized Cheeger-Gromoll metric $g_{p,q}$ on the tangent bundle $TM$ and applying the geometric framework described in Proposition~\ref{MA11}, we derive explicit expressions for the sectional curvature $\mathcal{K}_{p,q}$ of $(TM, g_{p,q})$.
 	 		\begin{theorem}
 	 			Let $(M, g, \nabla)$ be a statistical manifold with skewness tensor $K$, and let $g_{p,q}$ be the generalized Cheeger-Gromoll type metric on the tangent bundle $TM$. Then the sectional curvature $\mathcal{K}_{p,q}$ of $(TM, g_{p,q})$ satisfies the following expressions:
 	\begin{align}\label{B01}
 	\mathcal{K}_{p,q}(X^H,Y^H)&=\mathcal{K}(X,Y)+g((\nabla_YK)(X,Y),X)-g((\nabla_XK)(Y,Y),X)+g(K_XX,K_YY)\\
 	&-|K_XY|^2-\frac{3}{4\alpha^p}|R(X,Y)u|^2-\frac{3q}{4\alpha^p}(g(R(X,Y)u,u))^2,\nonumber
 	\end{align}
	\begin{align}\label{B1}
\mathcal{K}_{p,q}(X^H,Y^V)=&\frac{\alpha^p}{1+qg(Y,u)^2}\big\{-\frac{1}{(2\alpha^p)^2}\{g(\widetilde R(u,Y)\widetilde R(u,Y)X,X)
\\
&+qg(Y,u)g(\widetilde R(u,u)\widetilde R(u,Y)X,X)\}-\frac{q}{(2\alpha^p)^2}g(Y,u)\{g(\widetilde R(u,Y)\widetilde R(u,u)X,X)
\nonumber\\
&+qg(Y,u)g(\widetilde R(u,u)\widetilde R(u,u)X,X)\}+g(K_Xu,u)^2\{-\frac{p(p+2)}{\alpha^2}|Y^V|^2\nonumber\\&
-\frac{q^3}{(1+q\tau)\alpha^{p}}g(Y,u)^2\}+\frac{2p}{\alpha^{p+1}}g(K_Xu,u)g(K_YY,X)+\frac{1}{\alpha^p}g((\nabla_XK)(Y,Y),X)\nonumber\\
&	-\frac{p}{\alpha}|Y^V|^2g((\nabla_XK)(u,u),X)+\frac{2q}{\alpha^p}g(Y,u)g((\nabla_XK)(Y,u),X)\nonumber\\
&-\frac{3q}{\alpha^p}g(K_XY,u)^2+\frac{4pq}{\alpha^{p+1}}g(Y,u)g(K_Xu,u)g(K_XY,u)\nonumber\\
&+\frac{1}{\alpha^p}g(K_YK_XY-K_XK_YY,X)+\frac{2q}{\alpha^p}g(Y,u)g(K_YK_Xu-K_XK_Yu,X)\nonumber\\
&+\frac{q^2}{\alpha^p}g(Y,u)^2|K_Xu|^2+\frac{p}{\alpha}|Y^V|^2g(K_uu,K_XX)\big\},\nonumber
\end{align}
	\begin{align}\label{Mo1}
	\mathcal{K}_{p,q}(X^V,Y^V)=&\frac{\alpha^{2p}}{1+qg(X,u)^2+qg(Y,u)^2}\big\{-\frac{p^2}{\alpha^2}|K_uu|^2|X^V|^2|Y^V|^2\\
	&+\frac{p}{\alpha^{p+1}}\{|X^V|^2g(K_YY,K_uu)+|Y^V|^2g(K_XX,K_uu)\}\nonumber\\
	&+\frac{2pq}{\alpha^{2p+1}}[g(X,u)g(K_uu,K_uX)+g(Y,u)g(K_uu,K_uY)]\nonumber\\
	&+\frac{1}{\alpha^{2p}}[|K_XY|^2-g(K_YY,K_XX)]+\frac{2q}{\alpha^{2p}}[g(X,u)g(K_XY,K_uY)\nonumber\\
	&+g(Y,u)g(K_XY,K_uX)]-\frac{2q}{\alpha^{2p}}[g(X,u)g(K_YY,K_uX)\nonumber\\
	&+g(Y,u)g(K_XX,K_uY)]+\frac{q^2}{\alpha^{2p}}[g(X,u)^2|K_Yu|^2+g(Y,u)^2|K_Xu|^2]\nonumber\\
	&+\{\frac{p^2q^2}{\alpha^{2+2p}}g(X,u)g(Y,u)|K_uu|^2-\frac{2pq}{\alpha^{1+2p}}g(K_uu,K_YX)\nonumber\\
	&-\frac{2q^2}{\alpha^{2p}}g(K_Xu,K_Yu)\}g(X,u)g(Y,u)\nonumber\\
	&-\!\frac{(\tau \mathcal{N}\alpha+2-p)p-\mathcal{N}\alpha^2}{\alpha^{p+2}}[g(X,u)^2+g(Y,u)^2]-\frac{p(\mathcal{M}\tau-1)-\mathcal{M}\alpha}{\alpha^{p+1}}\}.\nonumber
	\end{align}
	\end{theorem}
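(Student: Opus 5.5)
The plan is to compute each sectional curvature directly from Proposition~\ref{MA11}, taking $X,Y$ $g$-orthonormal at $x$. In that case the area functions $Q_{p,q}$ computed just before the theorem give the normalizing prefactors $1$, $\alpha^p/(1+qg(Y,u)^2)$ and $\alpha^{2p}/(1+qg(X,u)^2+qg(Y,u)^2)$. So in each case we need only the numerator $g_{p,q}(R^{p,q}(\bar X,\bar Y)\bar Y,\bar X)$. Since horizontal and vertical lifts are $g_{p,q}$-orthogonal, in each case only one component of the curvature contributes.

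\textbf{Horizontal planes.} The numerator is $g(\{R^{p,q}(X^H,Y^H)Y^H\}^H,X)$; we substitute $Z=Y$ in the first formula of Proposition~\ref{MA11}. The terms $g(R(X,Y)Y,X)$ and $g((\nabla_YK)(X,Y)-(\nabla_XK)(Y,Y),X)$ appear directly. The term $g(K_XK_YY-K_YK_XY,X)$ becomes $g(K_XX,K_YY)-|K_XY|^2$ by the self-adjointness in \eqref{def1}.

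For the $\frac{1}{4\alpha^p}$ block, we use the definition \eqref{M3}, which gives
\[
g(\widetilde R(u,V)W,X)=R(V,X,u,W).
\]
With this, the terms containing $\widetilde R(u,R(X,Y)u)Y$ pair with $X$ to give $-|R(X,Y)u|^2$, and the $q$-terms give $-q\,g(R(X,Y)u,u)^2$. The $Z=Y$ terms in the bracket vanish because of the antisymmetry in Remark~\ref{LM6'}. Collecting the coefficients $1+2=3$ should produce the $-\frac34$ factors in \eqref{B01}.

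\textbf{Mixed planes.} The numerator is $g(\{R^{p,q}(X^H,Y^V)Y^V\}^H,X)$, taken from the seventh formula with $Z=Y$. We go through it term by term:
\begin{itemize}
\item The $\widetilde R(Y,Y)X$ term drops out by antisymmetry.
\item The terms of the form $g(\widetilde R(u,Y)X,X)$ and $g(\widetilde R(u,u)X,X)$ vanish by the skew-symmetry in Remark~\ref{LM6'}.
\item The quadratic $\widetilde R\widetilde R$ terms survive unchanged.
\item The terms in $K$ and $\nabla K$ are rewritten using the symmetries of $K$ and $|Y^V|^2=g_{p,q}(Y^V,Y^V)$.
\end{itemize}
The terms involving $g(K_Xu,u)^2$ with coefficient $\frac{q^2(q\alpha-p(1+q\tau))}{(1+q\tau)\alpha^{p+1}}$ must be combined with the $p$-terms to reach the stated coefficient, which is a tedious but mechanical step. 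Multiplying by $\alpha^p/(1+qg(Y,u)^2)$ then gives \eqref{B1}.

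\textbf{Vertical planes.} The numerator is $g_{p,q}(\{R^{p,q}(X^V,Y^V)Y^V\}^V,X^V)$. This is $\alpha^{-p}\big(g(W,X)+q\,g(W,u)g(X,u)\big)$, where $W$ is the last formula of Proposition~\ref{MA11} evaluated at $Z=Y$. So we compute both $g(W,X)$ and $g(W,u)$, using $g(X,Y)=0$ and $g(X,X)=g(Y,Y)=1$. The final terms $g(X,Z)Y-g(Y,Z)X$ and $g(Z,u)(g(X,u)Y-g(Y,u)X)$ produce the last two summands of \eqref{Mo1}. The $u$-directional terms are $\{\cdots\}u$; when combined through the $q\,g(W,u)$ correction, they should cancel the $\frac{q}{1+q\tau}$ projections.

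\textbf{Main obstacle.} The main obstacle is bookkeeping: correctly tracking the $q$-correction from the vertical metric. In particular, the projected vectors $V-\frac{q}{1+q\tau}g(V,u)u$ satisfy
\[
g\Big(V-\frac{q}{1+q\tau}g(V,u)u,\;X\Big)+q\,g\Big(V-\frac{q}{1+q\tau}g(V,u)u,\;u\Big)g(X,u)=g(V,X)+\frac{q}{1+q\tau}\,g(V,u)\,g(X,u),
\]
and this identity is what will simplify the mixed-plane and vertical-plane sums. I will apply it systematically, before collecting coefficients.
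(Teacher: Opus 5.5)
Your route is the one the paper takes: $g$-orthonormal $X,Y$, the area factors $Q_{p,q}$, and the relevant components of Proposition~\ref{MA11}. The horizontal and mixed cases go through as you describe. Two justifications there need fixing:
\begin{enumerate}[i)]
\item In the horizontal case, the terms with $R(Y,Z)u$ vanish at $Z=Y$ simply because $R(Y,Y)=0$, not because of Remark~\ref{LM6'}.
\item In the mixed case, the $\widetilde R(Y,Y)X$, $\widetilde R(u,Y)X$ and $\widetilde R(u,u)X$ terms vanish because $g(\widetilde R(A,B)X,X)=R(X,X,A,B)=0$. The operator $\widetilde R(Y,Y)$ is not itself zero for a statistical $\nabla$, since $R$ is not skew in its last pair.
\end{enumerate}

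The genuine problem is in the vertical case: the identity you single out as the key step is false. For $V'=V-\frac{q}{1+q\tau}g(V,u)u$ one has $g(V',u)=\frac{g(V,u)}{1+q\tau}$, hence
\[
g(V',X)+q\,g(V',u)\,g(X,u)=g(V,X).
\]
That is, $\alpha^p g_{p,q}((V')^V,X^V)=g(V,X)$ exactly; this is how such vectors were produced in the proof of Theorem~\ref{L2}. Your version carries an extra $\frac{q}{1+q\tau}g(V,u)g(X,u)$. Applied to the projected vectors in the last formula of Proposition~\ref{MA11} at $Z=Y$, it leaves spurious terms not present in \eqref{Mo1}, for example one proportional to $\frac{pq^2}{1+q\tau}g(X,u)^2|K_uu|^2$, which nothing cancels. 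The identity is also not needed in the mixed case, which involves only horizontal components paired with $g$.

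A second ingredient is missing in the scalar bookkeeping. Put
\[
c_1=\frac{(\tau\mathcal{N}\alpha+2-p)p-\mathcal{N}\alpha^2}{\alpha^2},\qquad c_2=2\mathcal{M}'+\mathcal{M}(\mathcal{M}+\mathcal{N}\tau)-\mathcal{N},\qquad c_3=\frac{p(\mathcal{M}\tau-1)-\mathcal{M}\alpha}{\alpha}.
\]
The $u$-term with coefficient $c_2$ is not absorbed by any projection. Together with the $c_1$- and $c_3$-terms, it pairs with $X^V$ to give $\alpha^{-p}$ times
\[
-c_1\,g(Y,u)^2+(1+q\tau)\,c_2\,g(X,u)^2-c_3\big(1+q\,g(X,u)^2\big).
\]
This becomes the last two summands of \eqref{Mo1}, namely $-\alpha^{-p}\big(c_1(g(X,u)^2+g(Y,u)^2)+c_3\big)$, only after using $(1+q\tau)c_2=q\,c_3-c_1$. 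That is exactly the identity recorded in the Remark following the theorem. It has to be checked from $\mathcal{M}=\frac{q\alpha+p}{\alpha(1+q\tau)}$, $\mathcal{N}=\frac{pq}{\alpha(1+q\tau)}$ and $\mathcal{M}'=d\mathcal{M}/d\tau$, and it does hold. So your claim that the last two summands come from the $g(X,Z)Y-g(Y,Z)X$ and $g(Z,u)(g(X,u)Y-g(Y,u)X)$ terms alone is incorrect. With the corrected pairing identity and this relation, the remaining $K$-terms do assemble into \eqref{Mo1} term by term.
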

\begin{remark}
	The final two terms in the expression for $\mathcal{K}_{p,q}(X^V, Y^V)$ can be rearranged using the following identity:
	\begin{align*}
	\frac{q\tau\left(p(\mathcal{M}\tau - 1) - \mathcal{M}\alpha \right)}{\alpha\tau(1 + q\tau)}
	= 2\mathcal{M}' + \mathcal{M}(\mathcal{M} + \mathcal{N} \tau) - \mathcal{N}
	+ \frac{(\tau \mathcal{N} \alpha + 2 - p)p - \mathcal{N} \alpha^2}{\alpha^2 (1 + q\tau)}.
	\end{align*}
	This decomposition simplifies the expression and separates polynomial and rational components in the curvature formula.
\end{remark}


For a given point $(x,u)\in TM$ with $u\neq0$, let $\{e_1,\ldots,e_m\}$ be an orthonormal basis of the tangent space $T_xM$ with $e_1=\frac{u}{|u|}$. Then we have
$|e_1^V|^2=\frac{1+q\tau}{\alpha^p}$ and  $|e_2^V|^2=\ldots=|e_m^V|^2=\frac{1}{\alpha^p}$. Also,
\begin{align}\label{OM}
\{E_i=e_i^H, E_{m+j}={e_j^V}/{|e_j^V|}|1\leq i\leq m,1\leq j\leq m \},
\end{align}
is an orthonormal basis of the tangent space $T_{(x,u)}TM$ with respect to the Cheeger-Gromoll type metric $g_{p,q}$.
 \begin{lemma}\label{ML}
	Let $(M,g, \nabla)$ be a statistical manifold with the sectional curvature $\mathcal{K}$ such that the tangent space $TM$ is endowed with the Cheeger-Gromoll type metric $g_{p,q}$.
	If $(x,u)\in BM_q$  and $\{E_1,\ldots,E_{2m}\}$ is an orthonormal basis of the tangent space $T_{(x,u)}TM$ characterized by
 (\ref{OM}), then
 the sectional curvature $\mathcal{K}_{p,q}$  is given by
\begin{align}
&\mathcal{K}_{p,q}(E_i,E_j)=\mathcal{K}(e_i,e_j)-\frac{3}{4\alpha^p}|R(e_i,e_j)u|^2-\frac{3q}{4\alpha^p}(g(R(e_i,e_j)u,u))^2+g((\nabla_{e_j}K)(e_i,e_j),e_i)\label{x1}\\
&\ \ \ \ \ \ \ \ \ \ \ \ \ \ \ \ \  -g((\nabla_{e_i}K)(e_j,e_j),e_i)+g(K_{e_i}e_i,K_{e_j}e_j)-|K_{e_i}e_j|^2,\nonumber
\\
&\mathcal{K}_{p,q}(E_i,E_{m+1})=A_1g(K_{e_i}e_1,e_1)^2+A_2g((\nabla_{e_i}K)(e_1,e_1),e_i)+A_3|K_{e_i}e_1|^2+A_4g(K_{e_1}e_1,K_{e_i}e_i)\label{x2}\\
&\ \ \ \ \ \ \ \ \ \ \ \ \ \ \ \ \ \ \ \ \ +A_5|\widetilde R(u,e_1)e_i|^2,\nonumber
\\
& \mathcal{K}_{p,q}(E_i,E_{m+j})=-\frac{1}{4}g(\widetilde R(u,{e_j})\widetilde R(u,{e_j}){e_i},{e_i})
-{p(p+2)}\alpha^{p-2}g(K_{e_i}u,u)^2\label{x3}\\
&\ \ \ \ \ \ \ \ \ \ \ \ \ \ \ \ \ \ \ \ \ +{2p}\alpha^{p-1}g(K_{e_i}u,u)g(K_{e_j}{e_j},{e_i})
	+\alpha^pg((\nabla_{e_i}K)({e_j},{e_j}),{e_i})\nonumber\\
	&\ \ \ \ \ \ \ \ \ \ \ \ \ \ \ \ \ \ \ \ \ -{p}\alpha^{p-1}g((\nabla_{e_i}K)(u,u),{e_i})
-{3q}\alpha^pg(K_{e_i}{e_j},u)^2\nonumber\\
&\ \ \ \ \ \ \ \ \ \ \ \ \ \ \ \ \ \ \ \ \ +\alpha^pg(K_{e_j}K_{e_i}{e_j}-K_{e_i}K_{e_j}{e_j},{e_i})
+{p}\alpha^{p-1}g(K_uu,K_{e_i}{e_i}),\nonumber
\\
&\mathcal{K}_{p,q}(E_{m+1},E_{m+j})=B_1|K_{e_1}e_1|^2+B_2	|K_{e_j}e_1|^2+B_3
g(K_{e_j}e_j,K_{e_1}e_1)+B_4,\label{x4}
\\
&\mathcal{K}_{p,q}(E_{m+i},E_{m+j})=-\alpha^{2p-2}{p^2}|K_uu|^2+\alpha^{2p-1}{p}\{g(K_{e_j}{e_j},K_uu)+g(K_{e_i}{e_i},K_uu)\}\label{x5}\\
&\ \ \ \ \ \ \ \ \ \ \ \ \ \ \ \ \ \ \ \ \ \ \ +\alpha^{2p}|K_{e_i}{e_j}|^2-\alpha^{2p}g(K_{e_j}{e_j},K_{e_i}{e_i})
-\alpha^{p-1}({p(\mathcal{M}\tau-1)-\mathcal{M}\alpha}),\nonumber
\end{align}
where 
\begin{align*}
A_1 &= \frac{\alpha^p(1+q\tau)}{1+q\tau(1+\alpha^p)} \left[
-\frac{p(p+2)\tau^2}{\alpha^2}
-\frac{(q\tau)^3}{(1+q\tau)^2}
+\frac{2p\tau}{\alpha(1+q\tau)}
-\frac{3q\tau}{1+q\tau}
+\frac{4pq\tau^2}{\alpha(1+q\tau)}
\right], \\
A_2 &= \frac{\alpha^p(1+q\tau)}{1+q\tau(1+\alpha^p)} \left(
\frac{1+2q\tau}{1+q\tau} - \frac{p\tau}{\alpha}
\right), \ \ \
A_3 = \frac{\alpha^p(1+q\tau)^2}{1+q\tau(1+\alpha^p)}, \\
A_4 &= -\frac{\alpha^p(1+q\tau)}{1+q\tau(1+\alpha^p)} \left(
\frac{1+2q\tau}{1+q\tau} - \frac{p\tau}{\alpha}
\right), \ \ \
A_5 = \frac{(1+q\tau)^2}{4(1+q\tau(1+\alpha^p))}, 
\end{align*}
and
\begin{align*}
B_1 &= \frac{\alpha^{2p}(1+q\tau)}{1+q\tau(1+\alpha^p)} \left(
-\frac{(p\tau)^2}{\alpha^2}
+\frac{p\tau}{\alpha(1+q\tau)}
+\frac{2pq\tau^2}{\alpha^{p+1}(1+q\tau)}
\right), \\
B_2 &= \frac{\alpha^{2p}(1+q\tau)^2}{1+q\tau(1+\alpha^p)}, \ \ \ 
B_3 = \frac{\alpha^{2p}(1+q\tau)}{1+q\tau(1+\alpha^p)} \left(
\frac{p\tau}{\alpha} - \frac{1+2q\tau}{1+q\tau}
\right), \\
B_4 &= \frac{\alpha^{2p}(1+q\tau)}{1+q\tau(1+\alpha^p)} \left\{
-\frac{(\tau \mathcal{N} \alpha + 2 - p)p - \mathcal{N} \alpha^2}{\alpha^{p+2}} \cdot \frac{\alpha^p \tau}{1+q\tau}
- \frac{p(\mathcal{M}\tau - 1) - \mathcal{M} \alpha}{\alpha^{p+1}}
\right\}.
\end{align*}
\end{lemma}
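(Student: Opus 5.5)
The plan is to specialize the three general sectional curvature formulas of the preceding Theorem, namely \eqref{B01}, \eqref{B1} and \eqref{Mo1}, to the adapted orthonormal frame \eqref{OM}. All the work is in evaluating the $u$-dependent quantities. Since $e_1=u/|u|$, we have $u=\sqrt{\tau}\,e_1$, $g(e_1,u)=\sqrt{\tau}$ and $g(e_j,u)=0$ for $j\geq 2$. The sectional curvature of a plane does not depend on the chosen spanning pair, so $\mathcal{K}_{p,q}(E_a,E_b)$ equals the corresponding value for $(e_i^H,e_j^V)$ and so on. It is therefore enough to substitute $X=e_i$, $Y=e_j$ into those formulas, with $|X^V|^2,|Y^V|^2$ equal to $\frac{1+q\tau}{\alpha^p}$ or $\frac{1}{\alpha^p}$ according to whether the index is $1$ or not.

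\emph{Horizontal--horizontal case.} For \eqref{x1}, formula \eqref{B01} has no $g(\cdot,u)$ factors. With $e_i,e_j$ orthonormal, $Q=1$, and \eqref{x1} is just \eqref{B01} written out.

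\emph{Mixed cases.} For \eqref{x3}, take $Y=e_j$ with $j\ge2$. Then $g(Y,u)=0$, so every $q g(Y,u)$ term in \eqref{B1} drops and the prefactor becomes $\alpha^p$. Multiplying through gives the coefficients $-\tfrac14$, $-p(p+2)\alpha^{p-2}$, $2p\alpha^{p-1}$, $\alpha^p$, $-p\alpha^{p-1}$, $-3q\alpha^p$, and so on, using $|Y^V|^2=\alpha^{-p}$. For \eqref{x2}, take $Y=e_1$. Here $g(Y,u)^2=\tau$, and every $u$ becomes $\sqrt\tau\,e_1$ by linearity: for instance $g(K_{e_i}u,u)=\tau g(K_{e_i}e_1,e_1)$ and $\widetilde R(u,u)=\tau\widetilde R(e_1,e_1)$. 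The latter vanishes because $\widetilde R(u,u)$ is skew in its arguments via \eqref{M3} and the antisymmetry of $R$ in its first two slots. We then group everything into the five invariants $g(K_{e_i}e_1,e_1)^2$, $g((\nabla_{e_i}K)(e_1,e_1),e_i)$, $|K_{e_i}e_1|^2$, $g(K_{e_1}e_1,K_{e_i}e_i)$ and $|\widetilde R(u,e_1)e_i|^2$. For the last one we use the skew-symmetry $g(\widetilde R(u,Y)Z,W)=-g(\widetilde R(u,Y)W,Z)$ from Remark~\ref{LM6'}, which turns $-g(\widetilde R\widetilde R e_i,e_i)$ into a squared norm. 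The area factor must be recomputed from the Gram determinant $|e_i^H|^2|e_1^V|^2$, since the norms are not both $1$. This produces the common factor $\frac{\alpha^p(1+q\tau)}{1+q\tau(1+\alpha^p)}$ appearing in $A_1,\dots,A_5$, and collecting terms gives the brackets in $A_1,A_2,A_4$.

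\emph{Vertical--vertical case.} For \eqref{x5} with $i,j\geq2$, all $g(\cdot,u)$ terms in \eqref{Mo1} vanish and the prefactor is $\alpha^{2p}$, which yields \eqref{x5} directly. For \eqref{x4} we set $X=e_1$ and $Y=e_j$, so $g(X,u)=\sqrt\tau$ and $g(Y,u)=0$. We expand $K_u=\sqrt\tau K_{e_1}$, and the terms collapse onto $|K_{e_1}e_1|^2$, $|K_{e_j}e_1|^2$ and $g(K_{e_j}e_j,K_{e_1}e_1)$. The pure $p,q$ terms give $B_4$ after including the normalization factor $\frac{1+q\tau}{\alpha^p}$ for $E_{m+1}$.

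The main obstacle is bookkeeping in the cases involving $e_1$, that is \eqref{x2} and \eqref{x4}. There, powers of $\tau$, $\alpha$ and $1+q\tau$ from the normalizations, the $q g(\cdot,u)^2$ terms and the $q/(1+q\tau)$ projections must be combined consistently into the stated coefficients, and the denominator $1+q\tau(1+\alpha^p)$ comes from the normalization mismatch. I would first verify each coefficient in the Sasaki case $p=q=0$ as a sanity check, and then track each monomial in $\tau$ separately.
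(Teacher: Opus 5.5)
Your route is the paper's: specialize \eqref{B01}, \eqref{B1}, \eqref{Mo1} to the frame \eqref{OM}. Case \eqref{x1} is fine. In the other four cases, however, you assert the coefficients rather than compute them, and your own normalization does not produce them.

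\textbf{The normalization does not give the stated coefficients.} In \eqref{x3}, with $g$-unit $Y=e_j\perp u$, the prefactor of \eqref{B1} is $\alpha^p$ and $|Y^V|^2=\alpha^{-p}$. You therefore get $-\frac{1}{4\alpha^p}$, $-\frac{p(p+2)}{\alpha^2}$, $\frac{2p}{\alpha}$, $1$, $-\frac{p}{\alpha}$, $-3q$, $1$, $\frac{p}{\alpha}$, which is every coefficient of \eqref{x3} divided by $\alpha^p$. In \eqref{x5} the $K$-terms come out smaller by $\alpha^{2p}$; only the constant term agrees. In \eqref{x2} and \eqref{x4}, the Gram determinants $|e_i^H|^2|e_1^V|^2=(1+q\tau)/\alpha^p$ and $|e_1^V|^2|e_j^V|^2=(1+q\tau)/\alpha^{2p}$ give the factors $\alpha^p/(1+q\tau)$ and $\alpha^{2p}/(1+q\tau)$. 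They do not give $\alpha^p(1+q\tau)/(1+q\tau(1+\alpha^p))$ or its analogue.

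\textbf{Where the stated coefficients come from.} The paper plugs the rescaled vectors $Y=\sqrt{\alpha^p}\,e_j$ and $Y=\sqrt{\alpha^p/(1+q\tau)}\,e_1$ (and the corresponding $X$ in the vertical cases) into \eqref{B1} and \eqref{Mo1}; their lifts are the unit vectors $E_{m+j}$, $E_{m+1}$. It keeps the prefactors $\alpha^p/(1+qg(Y,u)^2)$ and $\alpha^{2p}/(1+qg(X,u)^2+qg(Y,u)^2)$ while doing so. For the rescaled $e_1$ one has $1+qg(Y,u)^2=(1+q\tau(1+\alpha^p))/(1+q\tau)$, which is where the denominator comes from. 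But these prefactors equal $1/Q_{p,q}$ only for $g$-orthonormal $X,Y$, whereas $Q_{p,q}(E_i,E_{m+j})=1$; the paper's proof takes it to be $1/\alpha^p$.

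So your normalization is the consistent one, but it cannot reproduce the lemma as stated; the two agree only when $p=q=0$. An independent check: for $p=0$ and $K=0$ the fibres are totally geodesic with metric $(1+qr^2)\,dr^2+r^2\,d\sigma^2$, a paraboloid. Its radial curvature is $q/(1+q\tau)^2$. That is what \eqref{Mo1} gives with unit $e_1,e_j$, while $B_4$ evaluates to $q/(1+2q\tau)$.

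\textbf{The claim that $\widetilde R(u,u)=0$ is false.} By \eqref{M3}, $g(\widetilde R(X,Y)Z,W)=g(R(Z,W)X,Y)$. Antisymmetry of $R$ in its first two slots therefore makes each $\widetilde R(X,Y)$ skew-adjoint in $(Z,W)$, which is what Remark~\ref{LM6'} records. It does not make $\widetilde R$ skew in $(X,Y)$; instead $\widetilde R(X,Y)=-\widetilde R^*(Y,X)$. Concretely, $g(\widetilde R(u,u)Z,W)=g(R(Z,W)u,u)$, which vanishes identically only when $R=R^*$. The paper keeps such terms, e.g.\ $(g(R(e_i,e_j)u,u))^2$ in \eqref{x1}.

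The claim also undercuts your own plan. Since $u=\sqrt{\tau}\,e_1$, it would force $\widetilde R(u,e_1)=\tau^{-1/2}\widetilde R(u,u)=0$, killing the $A_5$ term you intend to keep. The paper instead uses linearity, $\widetilde R(u,u)=\sqrt{\tau}\,\widetilde R(u,e_1)$, together with the skew-adjointness. This gives $g(\widetilde R(u,u)\widetilde R(u,e_1)e_i,e_i)=-\sqrt{\tau}\,|\widetilde R(u,e_1)e_i|^2$ and $g(\widetilde R(u,u)\widetilde R(u,u)e_i,e_i)=-\tau\,|\widetilde R(u,e_1)e_i|^2$. These $q$-terms are exactly what produce the factor $(1+q\tau)^2$ in $A_5$.
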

\begin{proof}
	Putting $X^H=E_i=e_i^H$ and $Y^H=E_j=e_j^H$ in (\ref{B01}) we get (\ref{x1}).
	 Setting $X^H=E_i=e_i^H$ and $Y^V=E_{m+1}=e_1^V/|e_1^V|$ in (\ref{B1}), we have
	\begin{align}\label{c1}
	& \mathcal{K}_{p,q}(E_i,E_{m+1})=\mathcal{K}_{p,q}({e_i}^H,e_1^V/|e_1^V|)=\mathcal{K}_{p,q}(e_i^H,\sqrt{\frac{\alpha^p}{1+q\tau}}e_1^V)\\
	&=\frac{\alpha^p(1+q\tau)}{1+q\tau(1+\alpha^p)}
	\big\{-\frac{1}{4\alpha^p(1+q\tau)}\{g(\widetilde R(u,{e_1})\widetilde R(u,{e_1}){e_i},{e_i})
	+q\sqrt{\tau}g(\widetilde R(u,u)\widetilde R(u,{e_1}){e_i},{e_i})\}\nonumber\\
	&-\frac{\sqrt{\tau}q}{4\alpha^p(1+q\tau)}\{g(\widetilde R(u,{e_1})\widetilde R(u,u){e_i},{e_i})
	+q\sqrt{\tau}g(\widetilde R(u,u)\widetilde R(u,u){e_i},{e_i})\}\nonumber\\
	&+g(K_{e_i}u,u)^2\{-\frac{p(p+2)}{\alpha^2}-\frac{\tau q^3}{(1+q\tau)^2}\}+\frac{2p}{\alpha({1+q\tau})}g(K_{e_i}u,u)g(K_{e_1}{e_1},{e_i})\nonumber\\
	&	+\frac{1}{1+q\tau}g((\nabla_{e_i}K)({e_1},{e_1}),{e_i})-\frac{p}{\alpha}g((\nabla_{e_i}K)(u,u),{e_i})+\frac{2\sqrt{\tau}q}{1+q\tau}g((\nabla_{e_i}K)({e_1},u),{e_i})\nonumber\\
	&-\frac{3q}{1+q\tau}g(K_{e_i}{e_1},u)^2+\frac{4\sqrt{\tau}pq}{\alpha(1+q\tau)}g(K_{e_i}u,u)g(K_{e_i}{e_1},u)+\frac{1}{1+q\tau}g(K_{e_1}K_{e_i}{e_1}-K_{e_i}K_{e_1}{e_1},{e_i})\nonumber\\
	&+\frac{2\sqrt{\tau}q}{1+q\tau}g(K_{e_1}K_{e_i}u-K_{e_i}K_{e_1}u,{e_i})+\frac{\tau q^2}{1+q\tau}|K_{e_i}u|^2+\frac{p}{\alpha}g(K_uu,K_{e_i}{e_i})\big\}\nonumber.
	\end{align}
According to Remark \ref{LM6'}, we obtain 
	\begin{align*}
	&g(\widetilde R(u,{e_1})\widetilde R(u,{e_1}){e_i},{e_i})=\widetilde R(u,{e_1},\widetilde R(u,{e_1}){e_i},{e_i})=R(\widetilde R(u,{e_1}){e_i},{e_i},u,e_1)\\
	&=-R({e_i},\widetilde R(u,{e_1}){e_i},u,e_1)=-\widetilde  R(u,e_1, e_i, \widetilde R(u,e_1)e_i)=-|\widetilde R(u,e_1)e_i|^2.
	\end{align*}
	Similarly, it follows
	\begin{align*}
	&g(\widetilde R(u,u)\widetilde R(u,{e_1}){e_i},{e_i})=-\sqrt{\tau}|\widetilde R(u,e_1)e_i|^2,\\
	&g(\widetilde R(u,{e_1})\widetilde R(u,u){e_i},{e_i})=-\sqrt{\tau}|\widetilde R(u,e_1)e_i|^2,\\
	&g(\widetilde R(u,u)\widetilde R(u,u){e_i},{e_i})=-\tau\widetilde R(u,e_1)e_i|^2.
	\end{align*}
	Applying the above equations in (\ref{c1}), it follows 
	that (\ref{x2}) holds.
For prove (\ref{x3}), we set $X^H=E_i=e_i^H$ and $Y^V=E_{m+j}=e_j^V/|e_j^V|, j=2,..,m$.
In this case,
 $X=e_i$ and $Y=\sqrt{\alpha^p}e_j$. Hence we have $g(Y,u)=0$ and $	Q_{p,q}(X^H,Y^V)=\frac{1}{\alpha^p}\{1+qg(Y,u)^2\}=\frac{1}{\alpha^p}$. So, (\ref{B1}) implies
\begin{align*}
& \mathcal{K}_{p,q}(E_i,E_{m+j})=\mathcal{K}_{p,q}(e_i^H,e_j^V/|e_j^V|)=\mathcal{K}_{p,q}({e_i}^H,\sqrt{\alpha^p}e_j^V)\\
&
={\alpha^p}\big\{-\frac{1}{4\alpha^p}g(\widetilde R(u,{e_j})\widetilde R(u,{e_j}){e_i},{e_i})+g(K_{e_i}u,u)^2\{-\frac{p(p+2)}{\alpha^2}\}
\\
&+\frac{2p}{\alpha}g(K_{e_i}u,u)g(K_{e_j}{e_j},{e_i})+g((\nabla_{e_i}K)({e_j},{e_j}),{e_i})\\
&	-\frac{p}{\alpha}g((\nabla_{e_i}K)(u,u),{e_i})-{3q}g(K_{e_i}{e_j},u)^2\\
&+g(K_{e_j}K_{e_i}{e_j}-K_{e_i}K_{e_j}{e_j},{e_i})
+\frac{p}{\alpha}g(K_uu,K_{e_i}{e_i})\big\},
\end{align*}
which gives us the assertion.
Considering $X^V=E_{m+1}=e_1^V/|e_1^V|$ and $Y^V=E_{m+j}=e_j^V/|e_j^V|, j=2,..,m$,
we have $X=\sqrt{\frac{\alpha^{p}}{1+q\tau}}e_1$ and $Y=\sqrt{\alpha^{p}} e_j$. Also, $|X^V|^2=|E_{m+1}|^2=|\sqrt{\frac{\alpha^{p}}{1+q\tau}}e_1^V|^2=1$ and $|Y^V|^2=|E_{m+j}|^2=|\sqrt{\alpha^{p}} e_j^V|^2=1$. Hence   (\ref{Mo1}) gives
\begin{align*}
&\mathcal{K}_{p,q}(E_{m+1},E_{m+j})=\mathcal{K}_{p,q}(\sqrt{\frac{\alpha^{p}}{1+q\tau}}e_1^V,{\sqrt{\alpha^{p}}}e_j^V)\\
&=\frac{\alpha^{2p}(1+q\tau)}{1+q\tau(1+\alpha^p)}\big\{-\frac{p^2}{\alpha^2}|K_uu|^2+\frac{p}{\alpha}\{g(K_{e_j}{e_j},K_uu)+\frac{1}{1+q\tau}g(K_{e_1}{e_1},K_uu)\}\\
&+\frac{2pq\sqrt{\tau}}{\alpha^{p+1}}{\frac{1}{1+q\tau}}[g(K_uu,K_u{e_1})]+{\frac{1}{1+q\tau}}[|K_{e_1}{e_j}|^2- g(K_{e_j}{e_j},K_{e_1}{e_1})]\nonumber\\
&+{\frac{2q\sqrt{\tau}}{1+q\tau}}[g(K_{e_1}{e_j},K_u{e_j})]-{\frac{2q\sqrt{\tau}}{1+q\tau}}[g(K_{e_j}{e_j},K_u{e_1})]+{\frac{q^2}{1+q\tau}}\tau|K_{e_j}u|^2\nonumber\\
&
-\!\frac{(\tau \mathcal{N}\alpha+2-p)p-\mathcal{N}\alpha^2}{\alpha^{p+2}}{\frac{\alpha^{p}}{1+q\tau}}\tau-\frac{p(\mathcal{M}\tau-1)-\mathcal{M}\alpha}{\alpha^{p+1}}\}.\nonumber
\end{align*}
From the above equation, (\ref{x4}) follows.
Setting $X^V=E_{m+i}=e_i^V/|e_i^V|$ and $Y^V=E_{m+j}=e_j^V/|e_j^V|, i,j=2,..,m$, 
it follows $X=\sqrt{\alpha^{p}}e_s$ and $Y=\sqrt{\alpha^{p}}e_j$. So, we have $g(Y,u)=0=g(X,u)$ and  $|X^V|^2=1$ and $|Y^V|^2=1$.
Thus (\ref{Mo1}) implies
\begin{align*}
\mathcal{K}_{p,q}(E_{m+i},E_{m+j})=&\alpha^{2p}\big\{-\frac{p^2}{\alpha^2}|K_uu|^2+\frac{p}{\alpha}\{g(K_{e_j}{e_j},K_uu)+g(K_{e_i}{e_i},K_uu)\}\\
&+[|K_{e_i}{e_j}|^2-g(K_{e_j}{e_j},K_{e_i}{e_i})]
-\frac{p(\mathcal{M}\tau-1)-\mathcal{M}\alpha}{\alpha^{p+1}}\}.\nonumber
\end{align*}
Hence we find (\ref{x5}).
\end{proof}
\begin{proposition}
	Let $(M, g, \nabla)$ be a statistical manifold with the curvature tensor $R$. If $x\in M$ and $\{e_1,\ldots,e_m\}$ is an orthonormal basis of the tangent space $T_xM$, then
	\begin{align*}
	\sum_{i,j=1}^{m}|R(e_i,e_j)u|^2=\sum_{i,j=1}^{m}|\widetilde {R}(u,e_j)e_i|^2.
	\end{align*}
\end{proposition}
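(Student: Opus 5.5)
The identity is a relabelling of a triple sum once both sides are expanded in the orthonormal basis, so the plan is to expand each norm via Parseval in $\{e_1,\ldots,e_m\}$ and then compare the resulting sums of squares of components of $R$.

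First I will compute the components of $\widetilde R(u,e_j)e_i$ in the basis. By the definition \eqref{M3} in Remark \ref{LM6'}, $g(\widetilde R(X,Y)Z,W)=R(Z,W,X,Y)=g(R(Z,W)X,Y)$. Taking $X=u$, $Y=e_j$, $Z=e_i$, $W=e_k$ gives
\begin{align*}
g(\widetilde R(u,e_j)e_i,e_k)=g(R(e_i,e_k)u,e_j).
\end{align*}
Since $\{e_k\}$ is $g$-orthonormal, Parseval yields
\begin{align*}
|\widetilde R(u,e_j)e_i|^2=\sum_{k=1}^m g(\widetilde R(u,e_j)e_i,e_k)^2=\sum_{k=1}^m g(R(e_i,e_k)u,e_j)^2 .
\end{align*}

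Next I will sum over $i,j$ and interchange the finite sums, so that $j$ is summed innermost. Applying Parseval again, now in the index $j$, gives
\begin{align*}
\sum_{i,j=1}^m|\widetilde R(u,e_j)e_i|^2=\sum_{i,k=1}^m\sum_{j=1}^m g(R(e_i,e_k)u,e_j)^2=\sum_{i,k=1}^m|R(e_i,e_k)u|^2 .
\end{align*}
Renaming the dummy index $k$ as $j$ then gives the stated identity.

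No symmetry of the statistical curvature beyond the definition of $\widetilde R$ is needed. The only point requiring care is the slot bookkeeping in \eqref{M3}, since $R$ is not skew in its last two arguments for a statistical connection. The argument never swaps those slots: it only reads off the component $g(R(e_i,e_k)u,e_j)$ directly from the definition, so it remains valid for a statistical connection. I expect no real obstacle beyond this check.
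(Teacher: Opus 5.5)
Your proof is correct and follows the paper's argument. Both read off the components $g(\widetilde R(u,e_j)e_i,e_k)=g(R(e_i,e_k)u,e_j)$ directly from the defining relation of $\widetilde R$ and then apply Parseval in the orthonormal basis twice; the only cosmetic difference is that the paper first expands $u=\sum_l u^l e_l$ and recombines by linearity at the end, which your version avoids.
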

\begin{proof}
	Putting $u=\sum_{i=1}^{m}u^ie_i$ and $R(e_i,e_j)e_k=\sum_{i=1}^{m}g(R(e_i,e_j)e_k,e_s)e_s$, we have
	\begin{align*}
	\sum_{i,j=1}^{m}|R(e_i,e_j)u|^2=
	\sum_{i,j,l,k=1}^{m}u^lu^kg(R(e_i,e_j)e_k,R(e_i,e_j)e_l)=
	\sum_{i,j,l,k,s=1}^{m}u^lu^kg(R(e_i,e_j)e_k,e_s)g(R(e_i,e_j)e_l,e_s).
	\end{align*}
	The above equation and Remark \ref{LM6'} imply
	\begin{align*}
	\sum_{i,j=1}^{m}|R(e_i,e_j)u|^2=&
	\sum_{i,j,l,k,s=1}^{m}u^lu^kg(\widetilde {R}(e_k,e_s)e_i,e_j)g(\widetilde {R}(e_l,e_s)e_i,e_j)=\sum_{i,j,l,k=1}^{m}u^lu^kg(\widetilde {R}(e_k,e_j)e_i,\widetilde {R}(e_l,e_j)e_i)\\
	&=\sum_{i,j=1}^{m}|\widetilde {R}(u,e_j)e_i|^2.
	\end{align*}
\end{proof}
\begin{theorem}
	Let $(M, g, \nabla)$ be a statistical manifold with scalar curvature $S$. 
	If its tangent bundle $TM$ is endowed with the Cheeger-Gromoll type metric $g_{p,q}$, 
	then the scalar curvature $S_{p,q}$ of $(TM, g_{p,q})$ satisfies the following
		\begin{align*}
		S_{p,q}=&S+\sum_{i,j=1}^{m}\bigg\{\frac{2\alpha^p-3}{4\alpha^p}|R(e_i,e_j)u|^2-\frac{3q}{4\alpha^p}(g(R(e_i,e_j)u,u))^2+g((\nabla_{e_j}K)(e_i,e_j),e_i)\\
		&-g((\nabla_{e_i}K)(e_j,e_j),e_i)+(1-\alpha^p)(g(K_{e_i}e_i,K_{e_j}e_j)-|K_{e_i}e_j|^2)\bigg\}\\
		&+2\sum_{i=1}^{m}\bigg\{A_1g(K_{e_i}e_1,e_1)^2+(A_2-\tau {p}\alpha^{p-1}(m-1))g((\nabla_{e_i}K)(e_1,e_1),e_i)\\
		&+A_3|K_{e_i}e_1|^2+(A_4 +\tau{p}\alpha^{p-1}(m-1))g(K_{e_1}e_1,K_{e_i}e_i)+A_5|\widetilde R(u,e_1)e_i|^2\\
		&
		-(m-1){p(p+2)}\alpha^{p-2}g(K_{e_i}u,u)^2+(m-2)\alpha^{2p-1}{p}g(K_{e_i}{e_i},K_uu)\nonumber\bigg\}\\
		& +2\alpha^p\sum_{i=1, j=2}^{m}\bigg\{{2p}\alpha^{-1}g(K_{e_i}u,u)g(K_{e_j}{e_j},{e_i})
		+g((\nabla_{e_i}K)({e_j},{e_j}),{e_i})
		-{3q}g(K_{e_i}{e_j},u)^2\nonumber\\
		& +g(K_{e_j}K_{e_i}{e_j}-K_{e_i}K_{e_j}{e_j},{e_i})\bigg\}+2\sum_{j=2}^{m}\bigg(B_2		|K_{e_j}e_1|^2+B_3	
		g(K_{e_j}e_j,K_{e_1}e_1)\bigg)
		\\
		&+2(m-1)B_1|K_{e_1}e_1|^2+2(m-1)B_4-(m-1)(m-2)\alpha^{2p-2}{p^2}|K_uu|^2\\
		&
		-(m-1)(m-2)\alpha^{p-1}({p(\mathcal{M}\tau-1)-\mathcal{M}\alpha}).
		\end{align*}
		\end{theorem}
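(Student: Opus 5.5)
The plan is to compute $S_{p,q}$ as the double sum of sectional curvatures over the adapted orthonormal frame (\ref{OM}):
\[
S_{p,q}=\sum_{a\neq b}\mathcal{K}_{p,q}(E_a,E_b).
\]
I would split the ordered pairs into four blocks.
\begin{itemize}
\item Horizontal--horizontal pairs $(E_i,E_j)$ with $i\neq j$.
\item Mixed pairs $(E_i,E_{m+1})$ and $(E_i,E_{m+j})$, $j\ge 2$; each appears twice by symmetry, which produces the factor $2$.
\item Vertical pairs $(E_{m+1},E_{m+j})$, also counted twice.
\item Vertical pairs $(E_{m+i},E_{m+j})$ with $i\neq j\ge 2$, giving $(m-1)(m-2)$ ordered pairs.
\end{itemize}
Each block is then read off from Lemma \ref{ML}: (\ref{x1}) for the first, (\ref{x2}) and (\ref{x3}) for the mixed ones, and (\ref{x4}) and (\ref{x5}) for the vertical ones. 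Formally the terms with $i=j$ vanish in the first block, so the sums can be written over all $i,j$.

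\textbf{Horizontal block.} Summing (\ref{x1}) over $i,j$ gives $\sum\mathcal{K}(e_i,e_j)=S$. It also gives the term $-\frac{3}{4\alpha^p}\sum|R(e_i,e_j)u|^2$, the $q$-term, and the $\nabla K$ and $K\!K$ terms with coefficient $1$.

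\textbf{Mixed block.} Sum (\ref{x2}) over $i$ and (\ref{x3}) over $i$ and $j\ge2$.
\begin{itemize}
\item The curvature piece $-\frac14\sum_{i,j\ge2} g(\widetilde R(u,e_j)\widetilde R(u,e_j)e_i,e_i)$ equals $+\frac14\sum|\widetilde R(u,e_j)e_i|^2$, by the same symmetry manipulation used in the proof of Lemma \ref{ML}.
\item The $j=1$ contribution of this sum is zero anyway, and $j=1$ is already accounted for by $A_5$. So the preceding Proposition converts the sum into $\frac14\sum_{i,j}|R(e_i,e_j)u|^2$; with the factor $2$ this becomes $\frac{2}{4}$.
\item Adding $-\frac3{4\alpha^p}$ from the horizontal block produces the coefficient $\frac{2\alpha^p-3}{4\alpha^p}$.
\item Terms in (\ref{x3}) that do not depend on $j$ are multiplied by $(m-1)$. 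Examples are $-p(p+2)\alpha^{p-2}g(K_{e_i}u,u)^2$, $-p\alpha^{p-1}g((\nabla_{e_i}K)(u,u),e_i)$ and $p\alpha^{p-1}g(K_uu,K_{e_i}e_i)$.
\item Writing $u=\sqrt\tau e_1$, these become multiples of $g((\nabla_{e_i}K)(e_1,e_1),e_i)$ and $g(K_{e_1}e_1,K_{e_i}e_i)$. That is why $A_2$ and $A_4$ are shifted by $\mp\tau p\alpha^{p-1}(m-1)$.
\item The remaining $j$-dependent terms stay in the sum over $i$ and $j\ge2$ with prefactor $2\alpha^p$.
\item The $\alpha^p$-weighted $K\!K$ terms combine with the horizontal ones. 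The intended outcome is that both the $\nabla K$ and $K\!K$ contributions, full sums versus $j\ge2$ sums, reorganize into the displayed combination with factor $(1-\alpha^p)$ plus the residual $\sum_{i,j\ge2}$ block.
\end{itemize}

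\textbf{Vertical blocks.}
\begin{itemize}
\item Summing (\ref{x4}) over $j\ge2$ gives $2\sum_{j\ge2}(B_2|K_{e_j}e_1|^2+B_3 g(K_{e_j}e_j,K_{e_1}e_1))+2(m-1)(B_1|K_{e_1}e_1|^2+B_4)$.
\item Summing (\ref{x5}) over ordered pairs $i\neq j$ in $\{2,\dots,m\}$ gives $-(m-1)(m-2)\alpha^{2p-2}p^2|K_uu|^2$ and the constant term $-(m-1)(m-2)\alpha^{p-1}(p(\mathcal M\tau-1)-\mathcal M\alpha)$.
\item The terms $\alpha^{2p-1}p\,g(K_{e_i}e_i,K_uu)$ each appear $2(m-2)$ times for fixed $i$; this is the source of the $(m-2)$ term in the second sum.
\item The $\alpha^{2p}(|K_{e_i}e_j|^2-g(K_{e_i}e_i,K_{e_j}e_j))$ terms are to be merged with the horizontal $K\!K$ terms into the $(1-\alpha^p)$ expression.
\end{itemize}

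\textbf{Main obstacle.} The hard part is this bookkeeping: index ranges ($i$ from $1$ versus $j$ from $2$), diagonal terms, and the exact powers of $\alpha$ must all match the stated grouping. The cleanest way is to extend every partial sum to a full sum over $i,j$ and subtract the $j=1$ or $i=j$ corrections explicitly; I would expect these corrections to be exactly what the $A$, $B$ coefficients and the shifted terms absorb. I would not re-derive (\ref{x1})--(\ref{x5}); the content is purely the summation and regrouping.
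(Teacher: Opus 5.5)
Your decomposition of $S_{p,q}$ over the frame (\ref{OM}) and your block-by-block use of Lemma \ref{ML} are exactly the paper's argument. Your handling of the symmetric factor $2$, of $S$, and of the shifts $\mp\tau p\alpha^{p-1}(m-1)$ in $A_2,A_4$ is correct. However, the paper's proof is only that substitution into (\ref{Mo2}) followed by ``we get the assertion''. So the regrouping you defer is the entire proof, and at each point where you say terms ``combine'' or are ``absorbed'', the claim fails. The coefficients $A_k,B_k$ are fixed by Lemma \ref{ML} and absorb nothing.

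\textbf{The $j=1$ curvature term does not vanish.} With $u=\sqrt\tau\,e_1$ one has $g(\widetilde R(u,e_1)e_i,W)=\tau^{-1/2}g(R(e_i,W)u,u)$. For a statistical connection, $g(R(X,Y)Z,Z)=g((\widehat\nabla_XK)(Y,Z),Z)-g((\widehat\nabla_YK)(X,Z),Z)$, and this vanishes identically only when $R=R^*$. For example, on Euclidean $\mathbb{R}^2$ with $K_{\partial_1}\partial_1=f\partial_1$ and all other components zero, $g(R(\partial_2,\partial_1)\partial_1,\partial_1)=\partial_2 f$. This is why $\widetilde R(u,u)$ appears throughout Theorem \ref{L2}. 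Extending $\frac12\sum_i\sum_{j\ge2}|\widetilde R(u,e_j)e_i|^2$ to $\frac12\sum_{i,j}|R(e_i,e_j)u|^2$ therefore costs $-\frac12\sum_i|\widetilde R(u,e_1)e_i|^2$. The coefficient attached to $A_5$ then becomes $2A_5-\frac12$; keeping both the full sum and $2A_5$ double-counts $j=1$.

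\textbf{The vertical $K\!K$ block cannot be merged into $(1-\alpha^p)$.} By (\ref{x5}) it contributes $\alpha^{2p}\sum_{i\neq j,\ i,j\ge2}\bigl(|K_{e_i}e_j|^2-g(K_{e_i}e_i,K_{e_j}e_j)\bigr)$. This cannot be combined with the horizontal $\sum_{i,j\ge1}\bigl(g(K_{e_i}e_i,K_{e_j}e_j)-|K_{e_i}e_j|^2\bigr)$ into $(1-\alpha^p)\sum_{i,j\ge1}(\cdots)$, because both the power of $\alpha$ and the index range are wrong. For $K_{e_1}e_1=e_2$, $K_{e_1}e_2=e_1$ and all other components zero, the discrepancy is $-2\alpha^p$. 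The only $\alpha^p$-weighted $K\!K$ terms, $\alpha^p g(K_{e_j}K_{e_i}e_j-K_{e_i}K_{e_j}e_j,e_i)$ from (\ref{x3}), are already retained in the $2\alpha^p\sum_{i=1,j=2}$ block, so using them here would double count.

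\textbf{The $(m-2)$ term has the wrong index range.} Formula (\ref{x5}) produces $2(m-2)p\alpha^{2p-1}g(K_{e_i}e_i,K_uu)$ only for $i\ge2$. The displayed formula sums it over $i=1,\dots,m$, which adds a spurious $2(m-2)p\tau\alpha^{2p-1}|K_{e_1}e_1|^2$.

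Carried out correctly, your bookkeeping gives the displayed expression plus three correction terms that do not vanish for a general statistical structure. The paper's own intermediate line for the vertical block, with $\alpha^{2p}$ and $i,j\ge2$, already shows the second mismatch. You should write down the literal sum of (\ref{x1})--(\ref{x5}) rather than force it into the displayed grouping.
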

	\begin{proof}
	For a local orthonormal frame $\{E_1,\ldots, E_{2m} \}$ for $TM$ with $X_{i}^H=E_i$ and $Y_i^V=E_{m+i}$, $i=1,\ldots, m$, we have
\begin{align}\label{Mo2}
S_{p,q}\!=\!\sum_{i\neq j}^{2m} \mathcal{K}_{p,q}(E_{i},E_{j})=&\!\!\sum_{i,j=1,i\neq j}^{m} \mathcal{K}_{p,q}(E_{i},E_{j})+\!2\!\sum_{i, j=1}^{m}
\mathcal{K}_{p,q}(E_{i},E_{m+j})+\!\!\sum_{i,j=1, i\neq j}^{m}\!\!
\mathcal{K}_{p,q}(E_{m+i},E_{m+j})\\
=	&\sum_{i,j=1,i\neq j}^{m} \mathcal{K}_{p,q}(E_{i},E_{j})+2\sum_{i=1}^{m}
\mathcal{K}_{p,q}(E_{i},E_{m+1})+2\sum_{i=1, j=2}^{m}
\mathcal{K}_{p,q}(E_{i},E_{m+j})\nonumber\\
&+2\sum_{j=2}^{m}
\mathcal{K}_{p,q}(E_{m+1},E_{m+j})+\sum_{i,j=2, i\neq j}^{m}
\mathcal{K}_{p,q}(E_{m+i},E_{m+j}).\nonumber
\end{align}
Applying Lemma \ref{ML}, it follows
\begin{align*}
\sum_{i,j=1,i\neq j}^{m} \mathcal{K}_{p,q}(E_{i},E_{j})=&\sum_{i,j=1,i\neq j}^{m}\mathcal{K}(e_i,e_j)+\sum_{i,j=1}^{m}\bigg\{-\frac{3}{4\alpha^p}|R(e_i,e_j)u|^2-\frac{3q}{4\alpha^p}(g(R(e_i,e_j)u,u))^2\\
&+g((\nabla_{e_j}K)(e_i,e_j),e_i)-g((\nabla_{e_i}K)(e_j,e_j),e_i)+g(K_{e_i}e_i,K_{e_j}e_j)-|K_{e_i}e_j|^2\bigg\},
\end{align*}
\begin{align*}
2\sum_{i=1}^{m}
\mathcal{K}_{p,q}(E_{i},E_{m+1})=&2\sum_{i=1}^{m}\bigg\{A_1g(K_{e_i}e_1,e_1)^2+A_2g((\nabla_{e_i}K)(e_1,e_1),e_i)+A_3|K_{e_i}e_1|^2+A_4g(K_{e_1}e_1,K_{e_i}e_i)\\
&+A_5|\widetilde R(u,e_1)e_i|^2,\nonumber\bigg\},
\end{align*}

	\begin{align*}
& 	2\sum_{i=1, j=2}^{m}\mathcal{K}_{p,q}(E_i,E_{m+j})=-\frac{1}{2}\sum_{i=1, j=2}^{m}g(\widetilde R(u,{e_j})\widetilde R(u,{e_j}){e_i},{e_i})
-2\sum_{i=1 }^{m}(m-1){p(p+2)}\alpha^{p-2}g(K_{e_i}u,u)^2\\
&\ \ \ \ \ \ \ \ \ \ \ \ \ \ \ \ \ \ \ \ \ \ \ \ \ \ \ \ \ \ \ \ \ \  +{4p}\alpha^{p-1}\sum_{i=1, j=2}^{m}g(K_{e_i}u,u)g(K_{e_j}{e_j},{e_i})
+2\alpha^p\sum_{i=1, j=2}^{m}g((\nabla_{e_i}K)({e_j},{e_j}),{e_i})\nonumber\\
&\ \ \ \ \ \ \ \ \ \ \ \ \ \ \ \ \ \ \ \ \  \ \ \ \ \ \ \ \  \  \ \ \ \ \ -2{p}\alpha^{p-1}\sum_{i=1 }^{m}(m-1)g((\nabla_{e_i}K)(u,u),{e_i})
-{6q}\alpha^p\sum_{i=1, j=2}^{m}g(K_{e_i}{e_j},u)^2\nonumber\\
&\ \ \ \ \ \ \ \ \ \ \ \ \ \ \ \ \ \ \ \ \  \ \ \ \ \  \ \ \ \ \  \ \ \ \ +2\alpha^p\sum_{i=1, j=2}^{m}g(K_{e_j}K_{e_i}{e_j}-K_{e_i}K_{e_j}{e_j},{e_i})
+2{p}\alpha^{p-1}\sum_{i=1 }^{m}(m-1)g(K_uu,K_{e_i}{e_i}),\nonumber
\end{align*}
	\begin{align*}
\	2\sum_{j=2}^{m}\mathcal{K}_{p,q}(E_{m+1},E_{m+j})=&\	2(m-1)B_1|K_{e_1}e_1|^2+2B_2	\sum_{j=2}^{m}	|K_{e_j}e_1|^2+2B_3	\sum_{j=2}^{m}
g(K_{e_j}e_j,K_{e_1}e_1)+2(m-1)B_4,
\end{align*}
and
	\begin{align*}
\sum_{i,j=2, i\neq j}^{m}\mathcal{K}_{p,q}(E_{m+i},E_{m+j})=&-(m-1)(m-2)\alpha^{2p-2}{p^2}|K_uu|^2+(m-2)\alpha^{2p-1}{p}\{\sum_{j=2}^{m}g(K_{e_j}{e_j},K_uu)\\
&+\sum_{i=2}^{m}g(K_{e_i}{e_i},K_uu)\}
 +\alpha^{2p}\sum_{i,j=2, i\neq j}^{m}|K_{e_i}{e_j}|^2-\alpha^{2p}\sum_{i,j=2, i\neq j}^{m}g(K_{e_j}{e_j},K_{e_i}{e_i})\\
 &
-(m-1)(m-2)\alpha^{p-1}({p(\mathcal{M}\tau-1)-\mathcal{M}\alpha}).
\end{align*}
Using the above equations in (\ref{Mo2}), we get the assertion.

\end{proof}

\section*{Conclusion}
In this paper, we investigated the geometry of the tangent bundle $TM$ of a statistical manifold $(M,g,\nabla)$ equipped with a two-parameter family of generalized Cheeger--Gromoll metrics $g_{p,q}$. We derived explicit formulas for the Levi--Civita connection $\nabla^{p,q}$ and expressed the curvature of $(TM, g_{p,q})$ in terms of the Riemannian curvature and the skewness tensor $K$ of the base manifold. The sectional curvature $\mathcal{K}_{p,q}$ and scalar curvature $S_{p,q}$.
Moreover, we analyzed the behavior of geodesics, established conditions for totally geodesic fibers, and determined when the geodesic flow is incompressible. Necessary and sufficient conditions for $(TM, g_{p,q})$ to have constant sectional curvature were also identified. Several illustrative examples, including statistically deformed Euclidean spaces and the manifold of normal distributions, demonstrated the interplay between statistical structures and lifted geometric constructions.

These results open multiple directions for future research. Potential avenues include investigating curvature bounds, Ricci solitons, and geometric flows on tangent bundles of statistical manifolds. The framework can be extended to more general information-geometric models, offering deeper insights into the differential-geometric properties of statistical and probabilistic structures. Applications in optimization, machine learning, and information theory are also promising, as understanding the geometry of tangent bundles can lead to new theoretical and practical developments.\\\\

{\bf Conflict of Interest}\\
The authors declare that they have no conflict of interest.



\begin{thebibliography}{99}
	\bibitem{Abbassi}	M. Abbassi and M. Sarih, {\it Killing Vector Fields on Tangent Bundles with
	Cheeger Gromoll Metric}, Tsukuba J. Math., {\bf 27}(2) (2003), 295-306.
	
	\bibitem{AM} S. Amari, {\it Information geometry of the EM and em algorithms for
	neural networks}, Neural Networks {\bf 8}(9) (1995), 1379--1408.
	
	\bibitem{A} S. Amari, {\it Natural Gradient Works Efficiently in Learning}, Neural Computation, {\bf 10}(2) (1998), 251--276.
	
	\bibitem{AMH}  S. Amari and H. Nagaoka, {\it Methods of information geometry},
	American Mathematical Society, 2000.
	
	\bibitem{B} M. Barbosu, {\it Sasaki metric and the phase space}, Carpathian Journal of Mathematics, {\bf 14} (1998), 139--142.
	
	\bibitem{BG} L. P. Bettmann and J. Goold, {\it Information geometry approach to quantum stochastic thermodynamics}, Phys. Rev. E, {\bf 111} (2025), 014133. DOI: https://doi.org/10.1103/PhysRevE.111.014133
	
	\bibitem{BN}  M. Belkin,  P. Niyogi and V. Sindhwani, {\it Manifold regularization:
	a geometric framework for learning from labeled and unlabeled examples},
	Journal of Machine Learning Research, {\bf 7} (2006), 2399-2434.
	
		\bibitem{BL} M. Benyounes, E. Loubeau and C.M. Wood, {\it Harmonic sections of Riemannian
	vector bundles, and metrics of Cheeger-Gromoll type}, Diff. Geom. Appl., {\bf 25} (2007),  322--334.
	
	\bibitem{Blaga} A.M. Blaga, and C. Crasmareanu,  {\it Statistical Structures in Almost Paracontact Geometry}, Bulletin of the Iranian Mathematical Society {\bf 44}  (2018), 1407-1413.
		\bibitem{CU} O. Calin and C. Udri\c{s}te, {\textit{Geometric Modeling in Probability and Statistics}}, Springer, Cham, Switzerland 2014.
	\bibitem{Cai} D. Cai, X. Liu, and L. Zhang,  {\it Inequalities on generalized normalized $\delta$-Casorati curvatures for submanifolds in statistical manifolds of constant curvatures}, (Chinese) J. Jilin Univ. Sci. {\bf 57}(2)  (2019), 206-212.
	
	
	
	\bibitem{CG1}   J. Cheeger and D. Gromoll,  \textit{ On the structure of complete manifolds of nonnegative curvature}, Ann. Math. {\bf96} (1972), 413-443.
	
	\bibitem{D} R. Di Sipio, {\it Rethinking LLM training through information geometry and quantum metrics}, 	arXiv:2506.15830.
	
	\bibitem{F} R. A. Fisher, {\it On the mathematical foundations of theoretical statistics},  Phil. Trans. Roy. Soc. London, {\bf 222} (1922), 309--368.
	\bibitem{Furuhata} H. Furuhata, I. Hasegawa, Y. Okuyama, K. Sato and M.H. Shahid, {\it Sasakian
		statistical manifolds}, J. Geom. Phys.  {\bf 117} (2017), 179-–186. 
	
	
	\bibitem{Opozda}
	B. Opozda,  {\it On the tangent bundles of statistical manifolds}, In Proceedings of the Geometric Science of Information GSI 2023,
	St. Malo, France, 30 August–1 September 2023.
	\bibitem{PNI}	E. Peyghan, L. Nourmohammadifar and D. Iosifidis,  {\it Statistical conformal Killing
		vector fields for FLRW spacetime}, Phys Scr. {\bf 99}, 065253, (2024).
	\bibitem{Peyghan}  E. Peyghan, L. Nourmohammadifar and S. Uddin,  {\it Musical Isomorphisms and Statistical
		Manifolds}, Mediterr. J. Math. (2022), Doi.org/10.1007/s00009-022-02141-z
	\bibitem{PNII}	E. Peyghan,  L. Nourmohammadifar, and I. Mihai, {\it Statistical Submanifolds Equipped with
		${\mathcal{F}}$-Statistical Connections}, Mathematics,  {\bf 12}(16) (2024), 2492. https://doi.org/10.3390/math12162492
	\bibitem{PNII3}	 E. Peyghan, F. L. Nourmohammadi, A. Tayebi, {\it Cheeger–Gromoll type metrics on the (1,1)-tensor bundles}, J.
	Contemp. Math. Anal., {\bf 48}(6),  (2013), 59-70.
	\bibitem{OP}	B. Opozda,  {\textit Bochner’s technique for statistical structures}, Ann. Global Anal. Geom. {\bf 48} (2015), 357--395.
	
	\bibitem{S} S. Sasaki, {\it On the differential geometry of tangent bundles of Riemannian manifolds I}, Tohoku Mathematical Journal, {\bf 10}(3) (1958), 338--354.

		
	
	




\end{thebibliography}
\end{document}